\newtheorem{thm}{Theorem}
\newtheorem{prop}{Proposition}[section]
\newtheorem{lem}[prop]{Lemma}
\newtheorem{cor}[prop]{Corollary}
\newcommand{\sF}{\mathscr{F}}
\newcommand{\sG}{\mathscr{G}}
\newcommand{\CM}{\mathcal{CM}}
\newcommand{\G}{\mathscr{G}}
\theoremstyle{definition}
\newcommand\R{\mathbb{R}}
\renewcommand\P{\mathscr{P}}
\newcommand\N{\mathbb{N}}
\newcommand\MM{\mathbf{M}}
\newcommand\KK{\mathbf{K}}
\newcommand{\abs}[1]{\left| #1 \right| }
\newcommand\dd{\mathbf{d}}
\newcommand\UU{U}
\newcommand\LL{L}
\newcommand\UUG{\mathcal{U}}
\newcommand\LLG{\mathcal{L}}
\newcommand\QA[1]{A^{[#1]}}
\author[P. Pasteczka]{Pawe\l{} Pasteczka}
\address{Institute of Mathematics \\ University of the National Education Commission, \\ Podchor\k{a}\.zych str. 2, 30-084 Krak\'ow, Poland}
\email{pawel.pasteczka@up.krakow.pl}
\subjclass[2010]{26E60, 39B12, 39B22}
\keywords{Invariant means, extended means, multivariable generalizations, envelopes}
\DeclareMathOperator{\sign}{sign}
\newcommand{\restr}{\!\!\upharpoonright}
\numberwithin{equation}{section}
\def\eq#1{{\rm(\ref{#1})}}
\def\Eq#1#2{\ifthenelse{\equal{#1}{*}}
  {\begin{equation*}\begin{aligned}[]#2\end{aligned}\end{equation*}}
  {\begin{equation}\begin{aligned}[]\label{#1}#2\end{aligned}\end{equation}}}
\title[Multivariable generalizations of bivariate means...]{Multivariable generalizations of bivariate means via invariance}
\begin{document}

\begin{abstract}
For a given $p$-variable mean $M \colon I^p \to I$ ($I$ is a subinterval of $\mathbb{R}$), following (Horwitz,~2002) and (Lawson and Lim,~2008),
 we can define (under certain assumption) its $(p+1)$-variable $\beta$-invariant extension as the unique solution $K \colon I^{p+1} \to I$ of the functional equation
\begin{align*}
K\big(M(x_2,\dots,x_{p+1})&,M(x_1,x_3,\dots,x_{p+1}),\dots,M(x_1,\dots,x_p)\big)\\
&=K(x_1,\dots,x_{p+1}), \text{ for all }x_1,\dots,x_{p+1} \in I
\end{align*}
in the family of means.

Applying this procedure iteratively we can obtain a mean which is defined for vectors of arbitrary lengths starting from the bivariate one. The aim of this paper is to study the properties of such extensions. \end{abstract}
\maketitle
\section{Introduction}
The problem of the multi-variable generalization of bivariate means is very natural. Regrettably, such extensions remain unknown for many families of means. For example, in the family of generalized logarithmic means ${\mathcal E}_s \colon \R_+^2 \to \R_+$ ($s \in \R\setminus\{-1,0\}$) defined as 
\Eq{*}{
{\mathcal E}_s(x,y):=\Big(\frac{x^{s+1}-y^{s+1}}{(s+1)(x-y)}\Big)^{1/s},
}
Stolarsky means, Heronian mean, several means related to Pythagorean, etc. (see  for example \cite{Bul03} for their definition). 

The purpose of this note is to provide a broad approach to this problem. Namely, we extend some ideas of Aumann  \cite{Aum34,Aum44}, Horwitz \cite{Hor02} and Lawson--Lim \cite{LawLim08} to generalize bivariate means to the multivariable setting. More precisely, for a given $k$-variable symmetric, continuous, and strict mean on an interval, we can apply the so-called barycentric operator to generate the $(k+1)$-variable mean on the same interval. Then we use this procedure iteratively in order to get the desired extension.

At the very beginning, let us introduce the family of Gini means, which will be very helpful to illustrate the problem. 
Namely, in 1938, C.~Gini~\cite{Gin38} introduced the generalization of power means. For $r,s\in\R$, the \emph{Gini mean} $\G_{r,s}$ of 
positive variables $x_1,\dots,x_n$ ($n \in \N$) is defined as follows:
\Eq{*}{
  \G_{r,s}(x_1,\dots,x_n)
   :=\left\{\begin{array}{ll}
    \left(\dfrac{x_1^r+\cdots+x_n^r}
           {x_1^s+\cdots+x_n^s}\right)^{\frac{1}{r-s}} 
      &\mbox{if }r\neq s, \\[4mm]
     \exp\left(\dfrac{x_1^r\ln(x_1)+\cdots+x_n^r\ln(x_n)}
           {x_1^r+\cdots+x_n^r}\right) \quad
      &\mbox{if }r=s.
    \end{array}\right.
}
Clearly, in the particular case $s=0$, the mean $\G_{r,0}$ becomes the $r$th Power mean $\P_r$. It is also obvious that $\G_{s,r}=\G_{r,s}$. 

It can be easily shown that for all $r \in \R$ and $x,y \in \R_+$ we have $\G_{r,-r}(x,y)=\sqrt{xy}$. This equality, however, fails to be valid for more than two arguments. Thus (a~priori) it could happen that two different means coincide in the bivariate setting. As a consequence, we cannot recover the multi-variable mean based only on its two-variable restriction. 

On the other hand, if $f\colon I \to \R$ ($I$ is an interval) is a continuous, strictly monotone function and  $M=\QA{f}\colon \bigcup_{n=1}^\infty I^n \to I$ is a quasiarithmetic mean (see section~\ref{sec:QA} for definition) then it solves the functional equation
\Eq{E:152}{
M(x_1,\dots,x_{k+1})&=M\big(M(x_2,\dots,x_{k+1}),M(x_1,x_3,\dots,x_{k+1}),\dots,\\
& M(x_1,\dots,x_k)\big) \text{ for all }k \ge 2\text{ and }x_1\dots,x_{k+1} \in I.
}
Moreover, one can show that $M=\QA{f}$ is the only mean which solves this equation and such that $M(x_1,x_2)=\QA{f}(x_1,x_2)$ for all $x_1,x_2 \in I$. 
As a consequence, we can utilize \eq{E:152} to calculate the value of quasiarithmetic mean for a vector of an arbitrary length based only on its bivariate restriction. The aim of this paper is to generalize the procedure above, to extend other bivariate means to vectors of an arbitrary length.

\subsection{General framework}

Formally, a \emph{mean of order $k$}, or \emph{$k$-mean} for short, on
a set $X$ is a function $\mu \colon X^k \to X$ satisfying $\mu(x, \dots , x) = x$ for all $x \in X$.
In the twentieth century, the theory of topological means, that is, symmetric means on topological spaces for which the mean operation is continuous, was of great interest. This work was pioneered by Aumann~\cite{Aum34}, who showed, among other things, that no sphere admits such a mean \cite{Aum44}.

Now we proceed to the notion of $\beta$-invariant extension introduced by Horwitz \cite{Hor02}. Given a set $X$ and a $k$-mean $\mu \colon X^k\to X$, the \emph{barycentric operator} $\beta=\beta_\mu \colon X^{k+1}\to X^{k+1}$ is defined by
\Eq{*}{
\beta_\mu(x):=(\mu(x^{\vee1}),\dots,\mu(x^{\vee(k+1)})),
}
where $x^{\vee j}$ is a vector which is obtain by removing the $j$-th coordinate in the vector $x$, that is $x^{\vee j}=(x_i)_{i\ne j}$.
For a topological $k$-mean, we say that the barycentric map $\beta$ is \emph{power convergent} if for each $x \in X^{k+1}$, we have $\lim _{n\to \infty} \beta^n(x)=(x^*,\dots,x^*)$ for some $x^*\in X$.

A mean $\nu \colon X^{k+1} \to X$ is a $\beta$-invariant extension of $\mu \colon X^k \to X$ if $\nu \circ \beta_\mu=\nu$, that is 
\Eq{*}
{
\nu(\mu(x^{\vee1}),\dots,\mu(x^{\vee(p+1)}))=\nu(x), \quad x \in X^{k+1}.
}
Let us now recall an important result by Lawson--Lim \cite{LawLim08}.

\begin{prop}[\!\!\cite{LawLim08}, Proposition 2.4]
Assume that $\mu \colon X^k \to X$ is a topological k-mean and that the corresponding barycentric operator $\beta_\mu$ is power convergent. Define
$\widetilde \mu : X^{k+1} \to X$ by $\widetilde \mu(x) = x^*$, where $\lim_{n\to\infty} \beta_\mu^n(x)=(x^*,\dots,x^*)$.
\begin{enumerate}[(i)]
    \item $\tilde \mu : X^{k+1} \to X$ is a $(k + 1)$-mean X that is a $\beta$-invariant extension of $\mu$.
    \item Any continuous mean on $X^{k+1}$ that is a $\beta$-invariant extension of $\mu$ must equal $\tilde \mu$.
    \item If $\mu$ is symmetric, so is $\tilde \mu$.
\end{enumerate}
\end{prop}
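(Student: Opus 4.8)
The plan is to prove the three parts in order, with the bulk of the work concentrated in parts (i) and (ii).

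\medskip

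\textbf{Part (i).} First I would verify that $\widetilde\mu$ is well-defined: the hypothesis that $\beta_\mu$ is power convergent guarantees that for each $x \in X^{k+1}$ the limit $\lim_{n\to\infty}\beta_\mu^n(x)=(x^*,\dots,x^*)$ exists, and $x^*$ is uniquely determined by $x$, so the assignment $x \mapsto x^*$ is a genuine function. To see that $\widetilde\mu$ is a $(k+1)$-mean, I would check the diagonal property: if $x=(t,\dots,t)$, then because $\mu$ is a $k$-mean we have $\mu(x^{\vee j})=\mu(t,\dots,t)=t$ for every $j$, so $\beta_\mu(t,\dots,t)=(t,\dots,t)$; the constant sequence then converges to $(t,\dots,t)$, giving $\widetilde\mu(t,\dots,t)=t$. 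For the invariance $\widetilde\mu\circ\beta_\mu=\widetilde\mu$, I would observe that the orbit of $\beta_\mu(x)$ under iteration is just the orbit of $x$ shifted by one step, hence has the same limit; formally, $\beta_\mu^n(\beta_\mu(x))=\beta_\mu^{n+1}(x)$ converges to the same $(x^*,\dots,x^*)$, so $\widetilde\mu(\beta_\mu(x))=x^*=\widetilde\mu(x)$.

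\medskip

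\textbf{Part (ii).} Suppose $\nu\colon X^{k+1}\to X$ is any continuous mean satisfying $\nu\circ\beta_\mu=\nu$. Iterating the invariance gives $\nu=\nu\circ\beta_\mu^n$ for all $n$, so $\nu(x)=\nu(\beta_\mu^n(x))$ for every $x$ and every $n$. The plan is to let $n\to\infty$ on the right-hand side: by power convergence $\beta_\mu^n(x)\to(x^*,\dots,x^*)$ with $x^*=\widetilde\mu(x)$, and by continuity of $\nu$ together with the fact that $\nu$ is a mean (so $\nu(x^*,\dots,x^*)=x^*$), we obtain $\nu(x)=\lim_{n\to\infty}\nu(\beta_\mu^n(x))=\nu(x^*,\dots,x^*)=x^*=\widetilde\mu(x)$. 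Thus $\nu=\widetilde\mu$, which simultaneously establishes uniqueness and the fact that $\widetilde\mu$ is the \emph{only} continuous $\beta$-invariant extension.

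\medskip

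\textbf{Part (iii).} Assume $\mu$ is symmetric. I would show that this symmetry is inherited by the barycentric operator in the appropriate equivariant sense: for a permutation $\sigma$ of the $k+1$ coordinates, permuting the entries of $x$ permutes the deleted subvectors $x^{\vee j}$, and since $\mu$ is symmetric its value on each such subvector is unchanged, so $\beta_\mu(\sigma\cdot x)=\sigma\cdot\beta_\mu(x)$. Iterating, $\beta_\mu^n(\sigma\cdot x)=\sigma\cdot\beta_\mu^n(x)$; both sides converge, and since a permutation of a constant vector $(x^*,\dots,x^*)$ is the same constant vector, the limit of the left side is $(x^*,\dots,x^*)$ as well. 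Hence $\widetilde\mu(\sigma\cdot x)=x^*=\widetilde\mu(x)$, proving symmetry.

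\medskip

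The main obstacle is the interchange of limit and the map $\nu$ in part (ii): the argument rests on the continuity of $\nu$ at the diagonal point $(x^*,\dots,x^*)$, so care is needed to ensure that $\nu(\beta_\mu^n(x))\to\nu(x^*,\dots,x^*)$ follows from sequential continuity. Everything else is essentially bookkeeping built on the defining properties of means and the power-convergence hypothesis; the equivariance identity $\beta_\mu(\sigma\cdot x)=\sigma\cdot\beta_\mu(x)$ in part (iii) should be stated carefully but is routine once the indexing of deleted coordinates is tracked.
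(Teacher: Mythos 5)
Your proposal is correct, but there is nothing in the paper to compare it against: the statement is quoted verbatim from Lawson--Lim \cite{LawLim08} (their Proposition 2.4) and the paper supplies no proof of it. That said, your argument is the standard one for this result, and it coincides with the techniques the paper itself uses later in the interval setting: the shift-by-one observation $\beta_\mu^n(\beta_\mu(x))=\beta_\mu^{n+1}(x)$ and the ``iterate, then pass to the limit using continuity and the diagonal property'' argument for uniqueness are exactly how invariance and uniqueness are exploited via Proposition~\ref{prop:oldmain}\eqref{2.D}, and your equivariance identity $\beta_\mu(\sigma\cdot x)=\sigma\cdot\beta_\mu(x)$ is precisely the computation carried out in the proof of Theorem~\ref{thm:Ipproperties}\eqref{thm:Ipproperties.symm}. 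Two points you should make explicit rather than leave implicit: uniqueness of limits (needed both for $\widetilde\mu$ to be well defined and to conclude $\widetilde\mu(\sigma\cdot x)=\widetilde\mu(x)$ in part (iii)) requires the topological space $X$ to be Hausdorff, which is a standing assumption in \cite{LawLim08}; and the continuity-of-$\nu$ step in part (ii) is unproblematic in full generality, since continuous maps between arbitrary topological spaces preserve convergence of sequences, so no first-countability or metrizability hypothesis is needed there.
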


\subsection{Properties of means on the interval} For the sake of completeness, let us introduce formally $\N:=\{1,\dots\}$, and $\N_p:=\{1,\dots,p\}$ (where $p\in \N$).

Throughout this note, $I$ is a subinterval of $\R$. For a given $p \in \N$, a function $M \colon I^p \to I$ is a \emph{$p$-variable mean on $I$} if 
\Eq{*}{
\min(x) \le M(x) \le \max(x) \text{ for every }x \in I^p.
}
We can define some natural properties such as symmetry, continuity, convexity, etc. which refer to the property $M$ as a $p$-variable function. A mean $M$ is called \emph{strict} if $\min(x)< M(x) < \max (x)$ for every non-constant vector $x \in I^p$.

Now, define the order $\prec$ on the vector of real numbers of the same lengths by
\Eq{*}{
x\prec y :\iff \big(x_i \le y_i \text{ for every } i\big).
}
Then, a mean $M \colon I^p \to I$ is \emph{monotone} if $M(x)\le M(y)$ for all $x,y \in I^p$ with $x \prec y$.

A function $M \colon \bigcup_{p=1}^\infty I^p \to I$ is a \emph{mean} if all its $p$-variable restrictions $M\restr_p:=M|_{I^p}$ are means for all $p \ge 1$. Such a mean is called \emph{symmetric} (resp. \emph{continuous}, etc.) if all $M\restr_p$-s admit this property.

\subsection{Invariance in a family of means}
For $p \in \N$ a selfmapping $\MM \colon I^p \to I^p$ is called a \emph{mean-type mapping} if $\MM=(M_1,\dots,M_p)$ for some $p$-variable means $M_1,\dots,M_p$ on $I$. A mean $K \colon I^p \to I$ is called \emph{$\MM$-invariant} if $K \circ \MM=K$.

The most classical result by Borwein-Borwein \cite{BorBor87} states that if all means $M_i$ are continuous and strict, then there exists exactly one \mbox{$\MM$-invariant} mean.  This result has several generalisations (see, for example, Matkowski \cite{Mat09e}, and Matkowski-Pasteczka \cite{MatPas21}). For details we refer the reader to the reach literature on the subject, the classical one
Lagrange \cite{Lag84}, Gauss \cite{Gau18}, Foster-Philips \cite{FosPhi84a}, Lehmer \cite{Leh71}, Schoenberg \cite{Sch82} 
as well as more recent 
Baj\'ak--P\'ales \cite{BajPal09b,BajPal09a,BajPal10,BajPal13}, Dar\'oczy--P\'ales \cite{Dar05a,DarPal02c,DarPal03a}, Der\k{e}gowska--Pasteczka~\cite{DerPas2005},
G{\l}azowska \cite{Gla11b,Gla11a}, Jarczyk--Jarczyk \cite{JarJar18}, Matkowski \cite{Mat99b,Mat02b,Mat05,Mat09e}, Matkowski--P\'ales \cite{MatPal15}, Matkowski--Pasteczka \cite{MatPas20a,MatPas21} and Pasteczka \cite{Pas16a,Pas19a,Pas22b,Pas23b}.

In the next section, we will recall only the results from \cite{Pas23b}, as they are the most suitable for our purposes.

In this restricted (interval) setting, we know that a $\beta$-invariant extension is uniquely defined (in the family of means) if and only if the barycentric operator is power convergent (see \cite[Theorem~1]{MatPas21}). Moreover, the barycentric operator is a special case of a mean-type mapping consisting of extended means (see \cite{Pas23b}). Therefore, in what follows, we deliver a short introduction to these objects.

\section{Extended means}

 Observe that for $d, p\in \N$ and a $d$-variable mean $M \colon I^d \to I$ we can construct a $p$-variable mean by choosing $d$ indexes and applying the mean $M$ to the so-obtained vector of length $d$. Formally, for $d,p\in \N$ and a vector $\alpha \in \N_p^d$ we define a $p$-variable mean $M^{(p;\alpha)} \colon I^p \to I$ by
\Eq{E:parmean}{
M^{(p;\alpha)}(x_1,\dots,x_p):=M(x_{\alpha_1},\dots,x_{\alpha_d}) \text{ for all }(x_1,\dots,x_p)\in I^p.
}
Let us emphasize, that for $\alpha=(1,\dots,p)\in \N_p^p$, we have $M^{(p;\alpha)}=M$, thus (purely formally) each mean is an extended mean. However, this approach allowed us to establish several interesting results, which are dedicated to mean-type mappings consisting of such a means. We are going to recall them in the following section.

\subsection{Invariance of extended means}
Before we proceed to the invariance we need to build a mean-type mapping. The idea is to use an extended mean at each coordinate. Therefore, in some sense, we need to vectorise the previous approach. 

For $p \in \N$ and a vector $\dd=(d_1,\dots,d_p)\in \N^p$, let $\N_p^\mathbf{d}:=\N_p^{d_1}\times\dots\times \N_p^{d_p}$.
Using this notations, a sequence of means $\MM=(M_1,\dots,M_p)$ is called \emph{$\dd$-averaging} mapping on~$I$ if each $M_i$ is a $d_i$-variable mean on $I$. 

In the next step, for a $\dd$-averaging  mapping $\MM$ and a vector of indexes $\alpha =(\alpha_1,\dots,\alpha_p)\in \N_p^{d_1}\times\dots\times \N_p^{d_p}=\N_p^\dd$ let us define a mean-type mapping $\MM_\alpha \colon I^p \to I^p$ by 
\Eq{*}{
\MM_\alpha:=\Big(M_1^{(p;\alpha_1)},\dots,M_p^{(p;\alpha_p)}\Big),
}
where $M_i^{(p,\alpha_i)}$-s are defined by \eq{E:parmean}.

Observe that $\alpha$ is a $p$-tuple of sequences of elements in $\N_p$. Such an object can be represented as a directed graph. Therefore for a given $p\in\N$, $\dd=(d_1,\dots,d_p)\in \N^p$, and $\alpha \in \N_p^\dd$, we define the \emph{$\alpha$-incidence} graph $G_\alpha=(V_\alpha,E_\alpha)$ as follows: 
\Eq{*}{
    V_\alpha:=\N_p\quad\text{ and }\quad E_\alpha:=\big\{(\alpha_{i,j},i) \colon i \in \N_p \text{ and }j \in \N_{d_i}\big\}.
    }

Since the graph $G_\alpha$ plays a very important role in the invariance of $\MM_\alpha$, let us recall some elementary definitions from the graph theory.

A sequence $(v_0,\dots,v_n)$ of elements in $V$ such that $(v_{i-1},v_{i})\in E$ for all $i \in \{1,\dots,n\}$ is called a \emph{walk} from $v_0$ to $v_n$. The number $n$ is a \emph{length} of the walk. If for all $v, w \in V$ there exists a walk from $v$ to $w$, then $G$ is called \emph{irreducible}.
A \emph{cycle} is a non-empty walk in which only the first and last vertices are equal. A directed graph is said to be \emph{aperiodic} if there is no integer $k > 1$ that divides the length of every cycle of the graph. A~graph is called \emph{ergodic} if is simultaneously irreducible and aperiodic.

 Now we are ready to recall the main theorem from the paper \cite{Pas23b}. It turns out that the natural assumption to warranty that \mbox{$\MM_\alpha$-invariant} mean is uniquely determined, is that $G_\alpha$ is ergodic. 

\begin{prop}[\!\!\cite{Pas23b}, Theorem~2] \label{prop:oldmain}
Given an interval $I \subset \R$, parameters $p \in \N$, $\dd \in \N^p$, and a $\dd$-averaging mapping $\MM=(M_1,\dots,M_p)$ on $I$ such that all $M_i$-s are strict. For all $\alpha \in \N_p^\dd$ such that $G_\alpha$ is ergodic:
\begin{enumerate}[{\rm (a)}]
    \item \label{2.A} there exists the unique $\MM_\alpha$-invariant mean $K_\alpha \colon I^p \to I$; 
    \item \label{2.B} $K_\alpha$ is continuous;
    \item \label{2.C} $K_\alpha$ is strict;
    \item \label{2.D} $\MM_\alpha^n$ converges, uniformly on compact subsets of $I^p$, to the mean-type mapping $\KK_\alpha \colon I^p \to I^p$, $\KK_\alpha=(K_\alpha,\dots,K_\alpha)$;
    \item \label{2.E} $\KK_\alpha \colon I^p \to I^p$ is $\MM_\alpha$-invariant, that is $\KK_\alpha =\KK_\alpha\circ \MM_\alpha$;
    \item \label{2.F} if $M_1,\dots,M_p$ are nondecreasing with respect to each variable then so is $K_\alpha$;
    \item \label{2.G} if $I=(0,+\infty)$ and $M_1,\dots,M_p$ are  positively homogeneous, then every iterate of $\MM_\alpha$ and $K_\alpha$ are positively homogeneous.
\end{enumerate}
\end{prop}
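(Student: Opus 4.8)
The plan is to analyse the iterates $\MM_\alpha^n$ directly and to recover $K_\alpha$ as their common limit, following the Borwein--Borwein scheme adapted to the partial dependence encoded by $G_\alpha$. Fix $x\in I^p$ and set $[a,b]:=[\min(x),\max(x)]$; since each coordinate $M_i^{(p;\alpha_i)}$ is a mean, $\MM_\alpha$ maps $[a,b]^p$ into itself, so every iterate stays in this compact cube. Writing $u_n:=\min \MM_\alpha^n(x)$ and $v_n:=\max \MM_\alpha^n(x)$, the mean property gives $u_n\le u_{n+1}\le v_{n+1}\le v_n$, whence $u_n\uparrow u_\infty$ and $v_n\downarrow v_\infty$ with $u_\infty\le v_\infty$. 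The whole argument rests on the following one-step consequence of strictness, which I would isolate first: if $w\in[a,b]^p$ has $\max(w)=v$ and $w_c<v$ for some coordinate $c$, then for every $i$ with $c\in\{\alpha_{i,1},\dots,\alpha_{i,d_i}\}$ (equivalently $(c,i)\in E_\alpha$) one has $(\MM_\alpha(w))_i<v$; indeed the data of $M_i$ then lie in $[a,v]$ and are non-constant, so strictness pushes the mean strictly below $v$.

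The heart of the proof, and the step I expect to be the main obstacle, is to upgrade $u_\infty\le v_\infty$ to equality, and this is exactly where ergodicity of $G_\alpha$ enters. I would argue by contradiction: if $v_\infty>u_\infty$, choose by compactness a subsequence with $\MM_\alpha^{n_j}(x)\to y^\ast$; continuity of each iterate together with $v_{n_j+m}\to v_\infty$ forces $\max\MM_\alpha^m(y^\ast)=v_\infty$ for every $m\ge0$, while $y^\ast$ is non-constant because $\max(y^\ast)-\min(y^\ast)=v_\infty-u_\infty>0$. Set $S_m:=\{i:(\MM_\alpha^m(y^\ast))_i=v_\infty\}$. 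The sub-lemma shows that $i\in S_{m+1}$ forces every predecessor of $i$ in $G_\alpha$ to lie in $S_m$; contrapositively, a coordinate outside $S_m$ throws all of its successors out of $S_{m+1}$. Picking $c_0\notin S_0$ and using that an irreducible \emph{and} aperiodic graph is primitive (there is a single $N$ admitting walks of length exactly $N$ from $c_0$ to every vertex), I can propagate ``being below $v_\infty$'' along such walks and conclude $S_N=\varnothing$, i.e.\ $\max\MM_\alpha^N(y^\ast)<v_\infty$, contradicting $\max\MM_\alpha^N(y^\ast)=v_\infty$. Aperiodicity is indispensable here: it is what makes all coordinates fall below $v_\infty$ at one common time $N$, which a periodic graph (a pure cycle, say) would prevent.

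Granting $u_\infty=v_\infty$, every coordinate of $\MM_\alpha^n(x)$ is trapped between $u_n$ and $v_n$, so $\MM_\alpha^n(x)\to(K_\alpha(x),\dots,K_\alpha(x))$ with $K_\alpha(x):=\lim u_n=\lim v_n$; the sandwich $\min(x)\le u_n\le K_\alpha(x)\le v_n\le\max(x)$ shows $K_\alpha$ is a mean, already settling existence in (a) and the pointwise part of (d). Invariance in (a) and (e) comes from shifting the iteration index: $\MM_\alpha^n(\MM_\alpha(x))=\MM_\alpha^{n+1}(x)$ forces $\KK_\alpha\circ\MM_\alpha=\KK_\alpha$, hence $K_\alpha\circ\MM_\alpha=K_\alpha$. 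For uniqueness, if $K$ is \emph{any} $\MM_\alpha$-invariant mean then $K=K\circ\MM_\alpha^n$, so $K(x)=K(\MM_\alpha^n(x))\in[u_n,v_n]\to K_\alpha(x)$; this uses only the mean property of $K$, not its continuity. To get continuity (b) and \emph{uniform} convergence on compacta (d) I would invoke Dini's theorem: on a compact cube the continuous maps $x\mapsto v_n(x)-u_n(x)$ decrease to $0$ and hence converge uniformly, and since each $\MM_\alpha^n$ is continuous (a composition of the $M_i$), its uniform limit $\KK_\alpha$, and thus $K_\alpha$, is continuous. Here, as in the Borwein--Borwein setting recalled above, I use that the $M_i$ are continuous, which claim (b) in any case requires.

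Strictness (c) reuses the crux machinery in a single pass: for non-constant $x$ the set $\{i:x_i=\max(x)\}$ is proper, so the same primitivity argument started from $x$ itself yields $\max\MM_\alpha^N(x)<\max(x)$ and, symmetrically, $\min\MM_\alpha^N(x)>\min(x)$; combined with $K_\alpha(x)=K_\alpha(\MM_\alpha^N(x))$ and the mean property this places $K_\alpha(x)$ strictly inside $(\min(x),\max(x))$. Finally, (f) and (g) are inherited through the limit: each $M_i^{(p;\alpha_i)}$ is nondecreasing in every variable (resp.\ positively homogeneous) whenever the $M_i$ are, so every iterate $\MM_\alpha^n$ has the property and it passes to the pointwise limit $K_\alpha$. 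In short, the routine part is the bookkeeping that converts convergence of the iterates into all seven conclusions; the genuinely hard point is the contraction $v_\infty=u_\infty$, where strictness must be propagated along $G_\alpha$ and where irreducibility alone does not suffice — aperiodicity is precisely what synchronises the decay across all coordinates.
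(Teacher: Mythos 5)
This proposition is not proved in the paper at all: it is imported verbatim from \cite{Pas23b}, so there is no internal proof to compare yours against, and I can only assess your argument on its own terms. On those terms the architecture is sound and is the natural Borwein--Borwein-type argument: the nested bounds $u_n\uparrow u_\infty$, $v_n\downarrow v_\infty$; the one-step strictness lemma (a coordinate strictly below the running maximum forces all of its successors in $G_\alpha$ strictly below it one step later); the use of primitivity (a finite irreducible aperiodic digraph admits one $N$ with walks of length exactly $N$ between all pairs of vertices) to conclude $S_N=\varnothing$ and hence $u_\infty=v_\infty$; the index shift $\MM_\alpha^n\circ\MM_\alpha=\MM_\alpha^{n+1}$ for invariance; the purely order-theoretic uniqueness argument (valid for \emph{any} invariant mean, matching the claim of uniqueness in the whole family of means); Dini's theorem for \eqref{2.B} and \eqref{2.D}; and the pointwise-limit inheritance for \eqref{2.C}, \eqref{2.F}, \eqref{2.G}. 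Two small repairs: in the one-step lemma the input vector of $M_i$ need not be non-constant --- if it is constant, its common value is $w_c<v$ and the conclusion holds trivially, so argue by cases instead of asserting non-constancy; and in the crux you should record explicitly that $\min\MM_\alpha^m(y^\ast)=u_\infty<v_\infty$ (the same subsequence-plus-continuity argument applied to the minimum), since this is what produces the starting vertex $c_0\notin S_0$.

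The one substantive issue is the hypothesis mismatch that you noticed but left unresolved: the statement as transcribed assumes only that the $M_i$ are strict, while your proof uses their continuity at three essential points (continuity of the iterates in the subsequence argument, Dini's theorem, and continuity of the uniform limit). This is not a removable convenience: with strictness alone the proposition is false. Indeed, take $p=2$, $\dd=(2,2)$, $\alpha=\big((1,2),(1,2)\big)$, and $M_1=M_2=M$ a strict but discontinuous bivariate mean. Then $E_\alpha=\{(1,1),(2,1),(1,2),(2,2)\}$, so $G_\alpha$ is ergodic, and $\MM_\alpha(x)=\big(M(x),M(x)\big)$; any invariant mean $K$ satisfies $K(x)=K\big(M(x),M(x)\big)=M(x)$, so the unique $\MM_\alpha$-invariant mean is $M$ itself, and conclusion \eqref{2.B} fails. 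So what you have written is a correct proof of the intended statement --- the one in which the $M_i$ are assumed continuous as well as strict, which is the form needed for the source result to hold and the only form this paper ever applies (every application is to symmetric, continuous, strict means). You should therefore state the continuity hypothesis explicitly at the outset rather than flag it in passing at the end.
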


\section{Results}
\subsection{Auxiliary results}
In what follows we show two results. The second one shall be considered a direct application of Proposition~\ref{prop:oldmain}. It is precessed by the simple (and purely technical) lemma, which shows that a key  assumption of this proposition is satisfied. 

\begin{lem}\label{lem:Qp}
For every $p \ge3$ the graph $Q_{p}:=\big(\N_p,\{(i,j) \in \N_p^2\colon i \ne j\}\big)$
is ergodic. 
\end{lem}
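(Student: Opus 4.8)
The plan is to unwind the definition of \emph{ergodic} and check its two constituents separately, namely that $Q_p$ is irreducible and aperiodic. The key observation that makes everything transparent is that $Q_p$ is simply the complete directed graph on the vertex set $\N_p$: every ordered pair $(i,j)$ of \emph{distinct} vertices is an edge, and the only pairs excluded are the loops. Consequently there is never an issue with the existence of edges, and the whole proof reduces to exhibiting suitable short walks and cycles.

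First I would dispose of irreducibility, which is immediate. Given arbitrary $v,w \in \N_p$, if $v \ne w$ then $(v,w)$ is itself an edge, hence a walk of length $1$ from $v$ to $w$; and if $v = w$, then choosing any $u \ne v$ (possible since $p \ge 3 \ge 2$) yields the walk $(v,u,v)$ of length $2$. Thus every vertex is reachable from every other, and $Q_p$ is irreducible.

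For aperiodicity the plan is to exhibit two cycles whose lengths are coprime. Using $p \ge 3$, fix three distinct vertices $1,2,3 \in \N_p$. Then $(1,2,1)$ is a cycle of length $2$ and $(1,2,3,1)$ is a cycle of length $3$; each is a legitimate cycle in the sense of the excerpt, since all consecutive pairs are edges and only the first and last vertices coincide. If some integer $k > 1$ divided the length of every cycle, it would in particular divide both $2$ and $3$, hence divide $\gcd(2,3)=1$, which is absurd. Therefore no such $k$ exists and $Q_p$ is aperiodic; combined with the previous paragraph this gives that $Q_p$ is ergodic.

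I do not expect any genuine obstacle: the argument is a routine verification against the stated definitions, and the only point demanding slight care is confirming that $(1,2,1)$ and $(1,2,3,1)$ really qualify as cycles under the ``only the first and last vertices are equal'' convention. The role of the hypothesis $p \ge 3$ is precisely to guarantee the existence of the $3$-cycle that is coprime to the ubiquitous $2$-cycle; for $p = 2$ only $2$-cycles survive, so every cycle has even length and the graph is periodic, which shows that the bound $p \ge 3$ is sharp.
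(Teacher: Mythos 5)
Your proposal is correct and follows essentially the same argument as the paper: irreducibility is immediate because $Q_p$ is the complete directed graph on $\N_p$, and aperiodicity follows from the coprime-length cycles $1\to2\to1$ and $1\to2\to3\to1$ (the paper denotes them $(1,2)$ and $(1,2,3)$). Your additional remarks on the $v=w$ case and on the sharpness of $p\ge3$ are fine but not needed.
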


\begin{proof}
The irreducibility of $Q_p$ is trivial since there is an edge (in both directions) between each two distinct vertices. Furthermore, $(1,2)$ and $(1,2,3)$ are two cycles in $Q_p$ with a coprime length, which implies that $Q_p$ is aperiodic.
\end{proof}

Based on the above lemma, in view of Proposition~\ref{prop:oldmain}, part \eqref{2.A} the following proposition immediately follows.

\begin{prop}\label{prop:int}
Let $p \in \N$ with $p \ge 2$, and $M \colon I^p \to I$ be a symmetric, continuous, and strict mean.
Then there exists a unique $\beta_M$-invariant extension. Equivalently, the functional equation
\Eq{E:Ms}{
K(M(x^{\vee1}),\dots,M(x^{\vee(p+1)}))=K(x), \quad x \in I^{p+1}
}
has exactly one solution in the family of means $K \colon I^{p+1} \to I$.
\end{prop}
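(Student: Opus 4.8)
The plan is to recognize the barycentric operator $\beta_M$ as a particular instance of the mean-type mapping $\MM_\alpha$ governed by Proposition~\ref{prop:oldmain}, and then to invoke part~\eqref{2.A} directly. First I would set up the data required there with the ambient dimension equal to $p+1$ (this plays the role of the ``$p$'' in Proposition~\ref{prop:oldmain}). I would take the constant vector $\dd=(p,\dots,p)\in\N^{p+1}$ and the $\dd$-averaging mapping $\MM=(M,\dots,M)$ consisting of $p+1$ copies of $M$; since $M$ is strict, every component of $\MM$ is strict. Then I would choose $\alpha=(\alpha_1,\dots,\alpha_{p+1})$ where $\alpha_i\in\N_{p+1}^{p}$ lists the elements of $\N_{p+1}\setminus\{i\}$ in increasing order.

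With this choice one has $M^{(p+1;\alpha_i)}(x)=M(x_{\alpha_{i,1}},\dots,x_{\alpha_{i,p}})=M(x^{\vee i})$ for each $i$, so that $\MM_\alpha=\beta_M$ by the very definition of the barycentric operator. Thus the family of $\MM_\alpha$-invariant means and the family of $\beta_M$-invariant extensions coincide, and establishing existence and uniqueness for the former settles the claim for the latter.

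The one step requiring a moment's care is the identification of the incidence graph. The plan is to compute $E_\alpha=\{(\alpha_{i,j},i)\colon i\in\N_{p+1},\ j\in\N_p\}$ and observe that, as $j$ ranges over $\N_p$, the entry $\alpha_{i,j}$ ranges over all of $\N_{p+1}\setminus\{i\}$; hence $E_\alpha=\{(k,i)\in\N_{p+1}^2\colon k\ne i\}$. This shows that $G_\alpha$ is exactly the graph $Q_{p+1}$ of Lemma~\ref{lem:Qp}. Since $p\ge 2$ gives $p+1\ge 3$, that lemma yields that $G_\alpha$ is ergodic.

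With the strictness of the components of $\MM$ and the ergodicity of $G_\alpha$ in hand, all hypotheses of Proposition~\ref{prop:oldmain} are met, and part~\eqref{2.A} furnishes a unique $\MM_\alpha$-invariant mean $K\colon I^{p+1}\to I$. Because $\MM_\alpha=\beta_M$, this $K$ is precisely the unique $\beta_M$-invariant extension, and the invariance identity $K\circ\beta_M=K$ is literally the functional equation~\eqref{E:Ms}; so both the existence and the uniqueness assertions follow at once. I do not expect any genuine obstacle: the substantive content lives entirely in Proposition~\ref{prop:oldmain} and Lemma~\ref{lem:Qp}, and the only thing to check by hand is the index bookkeeping that identifies $G_\alpha$ with $Q_{p+1}$. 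I note in passing that existence and uniqueness rest on strictness alone; the symmetry and continuity of $M$ are not used here and would be relevant only to transferring such properties to the extension via the remaining parts of Proposition~\ref{prop:oldmain}.
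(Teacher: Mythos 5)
Your proposal is correct and follows essentially the same route as the paper: the same choice of $\dd=(p,\dots,p)$, $\MM=(M,\dots,M)$, and $\alpha_i$ enumerating $\N_{p+1}\setminus\{i\}$, the identification $\MM_\alpha=\beta_M$ and $G_\alpha=Q_{p+1}$, and then Lemma~\ref{lem:Qp} together with Proposition~\ref{prop:oldmain}\eqref{2.A}. Your closing observation that only strictness is invoked is consistent with the hypotheses of Proposition~\ref{prop:oldmain} as stated in the paper, so there is nothing to correct.
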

\begin{proof}
Let $\alpha\in \N_{p+1}^{p \times (p+1)}$ be given by
\Eq{Elem:alpha}{
\alpha=\big(\N_{p+1}\setminus\{1\}, \N_{p+1}\setminus\{2\},\dots,\N_{p+1}\setminus\{p+1\}\big).
}
Then $G_\alpha=Q_{p+1}$ and, by Lemma~\ref{lem:Qp}, it is ergodic. Thus, applying Proposition~\ref{prop:oldmain} part (a) to the $\dd$-averaging mapping $\MM$, where 
\Eq{Elem:dM}{
\dd:=(\underbrace{p,\dots,p}_{(p+1) \text{ times}}) \text{ and }
\MM:=(\underbrace{M,\dots,M}_{(p+1) \text{ times}}),
}
we get that there exists the unique $\MM_\alpha$-invariant mean.
In other words, the functional equation $K\circ \MM_\alpha=K$
has exactly one solution in the family of means $K \colon I^{p+1} \to I$.
However, in this setup, 
\Eq{*}{
\MM_\alpha(x)&=\big(M^{(p,\alpha_1)}(x),\dots,M^{(p,\alpha_{p+1})}(x)\big)\\
&=\Big( M\big((x_i)_{i \in \N_{p+1} \setminus\{1\}}\big),\dots, M\big((x_i)_{i \in \N_{p+1} \setminus\{p+1\}}\big)\Big)\\
&=(M(x^{\vee1}),\dots,M(x^{\vee(p+1)})),
}
thus the proof is completed.
\end{proof}

\subsection{Main result}
In view of Proposition~\ref{prop:int}, for every symmetric, continuous, and strict mean $M \colon I^p \to I$, we define its \emph{$\beta$-invariant extension} $\widetilde M \colon I^{p+1} \to I$ (denoted also as $M^\sim$) as the unique solution $K$ of equation \eq{E:Ms} in the family of means.

Now we show a few elementary properties of this extension. 
\begin{thm}\label{thm:Ipproperties}
Let $p \in \N$ with $p \ge 2$, and $M \colon I^p \to I$ be a symmetric, continuous, and strict mean. Then
\begin{enumerate}[(a)]
\item \label{thm:Ipproperties.cont} $\widetilde M$ is continuous;
\item \label{thm:Ipproperties.symm} $\widetilde M$ is symmetric;
\item \label{thm:Ipproperties.strict} $\widetilde M$ is strict;
\item \label{thm:Ipproperties.monot} if $M$ is monotone, then so is $\widetilde M$;
\item \label{thm:Ipproperties.conv} if $M$ is convex (concave) and monotone, then so is $\widetilde M$;
\item \label{thm:Ipproperties.homog} if $I=(0,+\infty)$ and $M$ is positively homogeneous, then so is $\widetilde M$.
\end{enumerate}
\end{thm}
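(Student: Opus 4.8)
The plan is to deduce each of the six properties from Proposition~\ref{prop:oldmain}, applied to the data $(\dd,\MM,\alpha)$ of \eq{Elem:dM} and \eq{Elem:alpha}, through the identification $\widetilde M=K_\alpha$ obtained in the proof of Proposition~\ref{prop:int}. Under this identification, continuity and strictness are literally parts~\eqref{2.B} and~\eqref{2.C} of Proposition~\ref{prop:oldmain}, so nothing remains to be done there.

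For symmetry I would exploit the uniqueness guaranteed by Proposition~\ref{prop:int}. The decisive fact is that, since $M$ is symmetric, the barycentric operator is permutation-equivariant: writing $x^\sigma:=(x_{\sigma(1)},\dots,x_{\sigma(p+1)})$ for a permutation $\sigma$ of $\N_{p+1}$, one computes
\Eq{*}{
\beta_M(x^\sigma)_j=M\big((x_{\sigma(i)})_{i\ne j}\big)=M\big(x^{\vee\sigma(j)}\big)=\beta_M(x)_{\sigma(j)},
}
so that $\beta_M\circ\sigma=\sigma\circ\beta_M$. Hence whenever $\widetilde M$ solves \eq{E:Ms}, the composition $\widetilde M\circ\sigma$ solves it as well, and uniqueness forces $\widetilde M\circ\sigma=\widetilde M$; as $\sigma$ is arbitrary, $\widetilde M$ is symmetric. (This also follows from part~(iii) of the Lawson--Lim result quoted above.)

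For monotonicity and positive homogeneity I would simply note that the base means in \eq{Elem:dM} all equal $M$: if $M$ is monotone, equivalently nondecreasing in each variable, then part~\eqref{2.F} yields that $K_\alpha=\widetilde M$ is nondecreasing in each variable, hence monotone; and if $I=(0,+\infty)$ and $M$ is positively homogeneous, then part~\eqref{2.G} yields that $\widetilde M$ is positively homogeneous.

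The one property outside the scope of Proposition~\ref{prop:oldmain}, and the main obstacle, is convexity; I would handle it through the iteration. Each component $M^{(p;\alpha_i)}$ of $\MM_\alpha$ is $M$ precomposed with a coordinate selection, hence, when $M$ is convex and monotone, it is itself convex and nondecreasing in each variable. The key tool is the composition rule: a convex function that is nondecreasing in each of its arguments, composed with finitely many convex functions, is again convex (and the class of functions nondecreasing in each variable is closed under such composition). An induction on $n$ then shows that every coordinate of $\MM_\alpha^n$ is convex and nondecreasing in each variable. Since, by part~\eqref{2.D}, $\MM_\alpha^n$ converges (uniformly on compacta, hence pointwise) to $(K_\alpha,\dots,K_\alpha)$, and a pointwise limit of convex functions is convex, we conclude that $\widetilde M=K_\alpha$ is convex. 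The concave case is identical, using that a concave function nondecreasing in each argument composed with concave functions is concave. It is exactly at this step that the additional hypothesis of monotonicity on $M$ is indispensable: without it the convex-of-convex composition rule could not be invoked at each stage of the induction, which is precisely why the statement couples convexity with monotonicity.
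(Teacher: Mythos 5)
Your proposal is correct, and for parts (a), (c), (d), (f) it coincides with the paper's proof: after identifying $\widetilde M=K_\alpha$ via \eq{Elem:alpha} and \eq{Elem:dM}, these are direct citations of Proposition~\ref{prop:oldmain} parts \eqref{2.B}, \eqref{2.C}, \eqref{2.F}, \eqref{2.G}. The genuine divergence is in the symmetry part. Both arguments begin with the same equivariance computation $\beta_M\circ\sigma=\sigma\circ\beta_M$, but you conclude by uniqueness (Proposition~\ref{prop:int}): $\widetilde M\circ\sigma$ is again a $\beta_M$-invariant mean, hence must equal $\widetilde M$. The paper instead passes to the limit of iterates, using Proposition~\ref{prop:oldmain} part \eqref{2.D} to write $\KK_\alpha(x\circ\sigma)=\lim_{n\to\infty}\MM_\alpha^n(x\circ\sigma)=\big(\lim_{n\to\infty}\MM_\alpha^n(x)\big)\circ\sigma=\KK_\alpha(x)\circ\sigma$ and reading symmetry off the equal coordinates of $\KK_\alpha$. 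Your route is slightly more economical, needing only uniqueness among means (no continuity of $\widetilde M\circ\sigma$, no convergence), and it would survive in settings where uniqueness holds without a constructive limit description; the paper's route is uniform with how it handles part (e) and Proposition~\ref{prop:Ip-comp}, all of which lean on \eqref{2.D}. For convexity your argument is the paper's in different clothing: the paper proves by induction the explicit inequality $\big[\MM_\alpha^n(tx+(1-t)y)\big]_k\le t\big[\MM_\alpha^n(x)\big]_k+(1-t)\big[\MM_\alpha^n(y)\big]_k$, whose induction step is exactly the composition rule you invoke (outer function convex and nondecreasing in each argument, inner functions convex), and then it too concludes by letting $n\to\infty$ via \eqref{2.D}. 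One small streamlining of your induction: it suffices to carry convexity of the coordinates of $\MM_\alpha^n$; their componentwise monotonicity need not be part of the induction hypothesis, since the outer function at every step is $M$ itself, and that is the only place where the monotonicity assumption on $M$ enters --- consistent with your closing observation.
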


\begin{proof} Set $\alpha$, $\dd$, and $\MM$ by equations \eq{Elem:alpha}, and \eq{Elem:dM}. Then $\widetilde M=K_\alpha$ is the unique $\MM_\alpha$-invariant mean. By Lemma~\ref{lem:Qp}, we know that $G_\alpha$ is ergodic.

 Therefore parts \eqref{thm:Ipproperties.cont}, \eqref{thm:Ipproperties.strict}, \eqref{thm:Ipproperties.monot}, \eqref{thm:Ipproperties.homog}, of this statement are implied by Proposition~\ref{prop:oldmain} parts \eqref{2.B}, \eqref{2.C}, \eqref{2.F},  \eqref{2.G}, respectively. Now, we need to prove only statements \eqref{thm:Ipproperties.symm} and \eqref{thm:Ipproperties.conv}.

To show \eqref{thm:Ipproperties.symm}, observe that for every vector $x \in I^{p+1}$ and a permutation $\sigma \colon \N_{p+1} \to \N_{p+1}$ we have
\Eq{*}{
\MM_\alpha(x \circ \sigma)
&=\big(M((x \circ \sigma)^{\vee 1}),\dots,M((x \circ \sigma)^{\vee(p+1)})\big)\\
&=\big(M(x^{\vee \sigma(1)}),\dots,M(x ^{\vee \sigma(p+1)})\big) \\
&=\big([\MM_\alpha(x)]_{\sigma(1)},\dots, [\MM_\alpha(x)]_{\sigma(p+1)} \big)
= \MM_\alpha(x) \circ \sigma.
}
Therefore, by Proposition~\ref{prop:oldmain} part \eqref{2.D}, for every $x \in I^{p+1}$ we have
\Eq{*}{
\KK_\alpha(x \circ \sigma)=\lim_{n \to \infty} \MM_\alpha^n(x \circ \sigma)=\big(\lim_{n \to \infty} \MM_\alpha^n(x) \big) \circ \sigma=\KK_\alpha(x) \circ \sigma,
}
thus, since $K_\alpha$ is (any of) entry in $\KK_\alpha$, we get $
K_\alpha(x \circ \sigma)=K_\alpha(x)$, which proves that $K_\alpha$ is symmetric.

Finally, to prove part \eqref{thm:Ipproperties.conv} assume that $M$ is convex and monotone and fix two vectors $x,y \in I^p$ and $t \in (0,1)$. As the intermediate step show that, for all $n \ge 0$,
\Eq{213}{
\big[\MM_\alpha^n(t x+(1-t)y)\big]_k \le t \big[\MM_\alpha^n(x)\big]_k+ (1-t) &\big[\MM_\alpha^n(y)\big]_k \\
&\text{ for all }k \in \N_{p+1}.
}
For $n=0$ inequality \eq{213} obviously holds (even with equality). Now assume that \eq{213} is valid for some $n \ge 0$. Then, for every $k_0 \in \N_{p+1}$, we have
\Eq{*}{
\big[\MM_\alpha^{n+1}(t x+(1-t)y)\big]_{k_0}
&=M\Big(\big(\MM_\alpha^{n}(t x+(1-t)y)\big)^{\vee k_0}\Big) \\
&\le M\Big(t\MM_\alpha^{n}(x)^{\vee k_0}+(1-t)\MM_\alpha^{n}(y)^{\vee k_0} \Big)\\
&\le t M\big(\MM_\alpha^{n}(x)^{\vee k_0}\big) + (1-t) M\big(\MM_\alpha^{n}(y)^{\vee k_0}\big)\\
&= t \big[\MM_\alpha^{n+1}(x)\big]_{k_0} + (1-t) \big[\MM_\alpha^{n+1}(y)\big]_{k_0}.
}
Since $k_0$ is an arbitrary element in $\N_{p+1}$ we obtain \eq{213} with $n$ replaced by $n+1$. Consequently \eq{213} is valid for all $n \ge 0$.  In the limit case as $n \to \infty$, in view of Proposition~\ref{prop:oldmain} part \eqref{2.D}, we get \Eq{*}{K_\alpha(t x+(1-t)y) \le K_\alpha(x)+(1-t)K_\alpha(y),}
which shows that $\widetilde M=K_\alpha$ is convex and, by the already proved part \eqref{thm:Ipproperties.monot}, monotone. The case when $M$ is concave, monotone is analogous.
 This completes the \eqref{thm:Ipproperties.conv} part of the proof.
 \end{proof}

In the following proposition, we show that this extension preserves the comparability of means.
\begin{prop}\label{prop:Ip-comp}
Let $p \in \N$ with $p \ge 2$, and $M, N \colon I^p \to I$ be symmetric, continuous, monotone, and strict means. Then $M \le N$ yields $\widetilde M \le \widetilde N$.
\end{prop}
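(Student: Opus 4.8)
The plan is to treat $\widetilde N$ as a \emph{supersolution} for the barycentric operator attached to $M$, and then let the $M$-iterates carry $x$ to the constant vector whose entry is $\widetilde M(x)$. Concretely, I would first fix the data of Theorem~\ref{thm:Ipproperties}: let $\MM_\alpha,\mathbf{N}_\alpha\colon I^{p+1}\to I^{p+1}$ be the barycentric operators of $M$ and $N$, that is $\MM_\alpha(x)=(M(x^{\vee1}),\dots,M(x^{\vee(p+1)}))$ and $\mathbf{N}_\alpha(x)=(N(x^{\vee1}),\dots,N(x^{\vee(p+1)}))$, and write $\widetilde M=K^M$, $\widetilde N=K^N$ for the invariant means furnished by Proposition~\ref{prop:int}. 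By Theorem~\ref{thm:Ipproperties} parts \eqref{thm:Ipproperties.cont} and \eqref{thm:Ipproperties.monot}, the mean $K^N$ is continuous and monotone (nondecreasing in each variable), and it is $\mathbf{N}_\alpha$-invariant by its very definition.

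The key step is to show that $K^N\circ\MM_\alpha\le K^N$. Fix $x\in I^{p+1}$. The hypothesis $M\le N$ gives $M(x^{\vee j})\le N(x^{\vee j})$ for every $j\in\N_{p+1}$, hence $\MM_\alpha(x)\prec\mathbf{N}_\alpha(x)$. Since $K^N$ is monotone, this yields $K^N(\MM_\alpha(x))\le K^N(\mathbf{N}_\alpha(x))$, and the right-hand side equals $K^N(x)$ by $\mathbf{N}_\alpha$-invariance. An immediate induction then gives $K^N(\MM_\alpha^n(x))\le K^N(x)$ for every $n\ge 0$, because $K^N(\MM_\alpha^{n+1}(x))=K^N(\MM_\alpha(\MM_\alpha^n(x)))\le K^N(\MM_\alpha^n(x))$.

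Finally I would pass to the limit $n\to\infty$. By Proposition~\ref{prop:oldmain} part \eqref{2.D} applied to the $M$-data, $\MM_\alpha^n(x)\to(K^M(x),\dots,K^M(x))$, so continuity of $K^N$ lets me pass the limit through and obtain $K^N(K^M(x),\dots,K^M(x))\le K^N(x)$. Because $K^N$ is a mean, its value on a constant vector is that constant, so the left-hand side is exactly $K^M(x)$; therefore $K^M(x)\le K^N(x)$ for all $x$, which is the claim $\widetilde M\le\widetilde N$.

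I do not expect a serious obstacle here: the whole comparison is driven by the monotonicity and invariance of the \emph{larger} extension $K^N$, while $M$ enters only through the convergence of its own iterates. The one point to be careful about is the justification of interchanging the limit with $K^N$, which is precisely where continuity of $K^N$ (Theorem~\ref{thm:Ipproperties}\eqref{thm:Ipproperties.cont}) and uniform convergence of $\MM_\alpha^n$ (Proposition~\ref{prop:oldmain}\eqref{2.D}) are needed; everything else is an elementary induction. One could symmetrically run the argument with $K^M$ as a subsolution for $\mathbf{N}_\alpha$, but the route above is cleaner since it invokes monotonicity of only one of the two means.
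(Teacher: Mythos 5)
Your proof is correct, but it runs along a genuinely different line than the paper's. The paper couples the two iterations: from $x\prec y$, monotonicity of $M$ (the \emph{smaller} mean) and $M\le N$ give $\MM_\alpha(x)\prec\mathbf{N}_\alpha(y)$, induction yields $\MM_\alpha^n(x)\prec\mathbf{N}_\alpha^n(y)$ for all $n$, and then Proposition~\ref{prop:oldmain}\eqref{2.D} is applied to \emph{both} operators so that passing to the limit coordinatewise gives $\widetilde M(x)\le\widetilde N(y)$ whenever $x\prec y$ (specializing to $y=x$ finishes). You instead use $\widetilde N=K^N$ as a monotone, continuous, $\mathbf{N}_\alpha$-invariant ``supersolution'': the inequality $K^N\circ\MM_\alpha\le K^N$, iterated, plus convergence of the $M$-iteration alone and reflexivity of $K^N$ on constant vectors. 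The trade-offs are real: your argument needs the \emph{derived} properties of the extension $\widetilde N$ — monotonicity and continuity via Theorem~\ref{thm:Ipproperties}\eqref{thm:Ipproperties.monot} and \eqref{thm:Ipproperties.cont} — but invokes the convergence result only for $\MM_\alpha$, and it has the flavor of a Lyapunov-function argument that would survive even if one only knew convergence for the smaller mean's barycentric operator. The paper's argument is more elementary in that it never uses any property of the limit means beyond their being coordinatewise limits (no continuity, no monotonicity of $\widetilde M$ or $\widetilde N$ is needed, and order passes through limits in $\R$ for free), and it proves the formally stronger statement $\widetilde M(x)\le\widetilde N(y)$ for all $x\prec y$; on the other hand it uses monotonicity of $M$ where you use monotonicity of $N$ — both hypotheses being available, this is immaterial here. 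One small remark on your write-up: the uniform-on-compacts character of the convergence in Proposition~\ref{prop:oldmain}\eqref{2.D} is not actually needed at the point where you pass the limit through $K^N$; pointwise convergence together with continuity of $K^N$ at the constant vector $(K^M(x),\dots,K^M(x))$ suffices.
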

\begin{proof} Let $\alpha$, $\dd$, and $\MM$ be like in equations \eq{Elem:alpha}, and \eq{Elem:dM}. 
Additionally, define $\mathbf{N}\colon (I^{p})^{p+1}\to I^{p+1}$ by $\mathbf{N}:=(N,\dots,N)$. However, $x^{\vee k} \prec y^{\vee k}$ for every $x,y \in I^{p+1}$ with $x \prec y$ and $k \in \N_{p+1}$. Therefore, since $M$ is monotone, we obtain 
\Eq{*}{
M(x^{\vee k}) \le M(y^{\vee k})\le N(y^{\vee k}) \text{ for every }k \in \N_{p+1},
}
which can be rewritten in a compact form as $\MM_\alpha(x) \prec \mathbf{N}_\alpha(y)$. Thus, by simple induction, $x \prec y$ implies  $\MM_\alpha^n(x) \prec \mathbf{N}_\alpha^n(y)$ for every $n \in \N$. In the limit case as $n \to \infty$, by Proposition~\ref{prop:oldmain} part \eq{2.D}, we obtain that $\widetilde M(x)\le \widetilde N(y)$ for every $x,y \in I^p$ with $x \prec y$. 

Finally, for every $x \in I^p$, we have $x \prec x$. Therefore, we obtain the desired inequality $\widetilde M(x) \le \widetilde N(x)$.
\end{proof}

\section{Multivariable generalization of bivariate means}
Let $M \colon I^2 \to I$ be a symmetric, continuous, and strict mean on an interval $I$. Then its \emph{iterative $\beta$-invariant extension} is a mean $M^e\colon \bigcup_{p=1}^\infty I^p \to I$ defined by
\Eq{E:MsR}{
M^e(x_1,\dots,x_p)=
\begin{cases}
x_1 &\text{ for }p=1;\\
M(x_1,x_2) &\text{ for }p=2;\\
M^{\sim(p-2)}(x_1,\dots,x_p) &\text{ for }p>2,
\end{cases}
}
where $M^{\sim p}$ is the $p$-th iteration of the $\beta$-invariant extension operator, that is
\Eq{*}{
M^{\sim1}=M^\sim=\widetilde M\text{ and }M^{\sim p}=(M^{\sim(p-1)})^\sim \text{ for }p \ge 2.
}

Now we adapt the result for the $\beta$-invariant extensions to the iterative setting.

\begin{prop}
Let $M \colon I^2 \to I$ be a symmetric, continuous, and strict mean. Then
\begin{enumerate}[(a)]
\item $M^e$ is continuous;
\item $M^e$ is symmetric;
\item $M^e$ is strict;
\item if $M$ is monotone, then so is $M^e$;
\item if $I=\R_+$ and $M$ is homogeneous, then so in $M^e$;
\item if $M$ is convex (concave) and monotone then so is $M^e$.
\end{enumerate}
\end{prop}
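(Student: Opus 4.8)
The plan is to argue by induction on the arity, exploiting the recursive structure of $M^e$ recorded in \eq{E:MsR}: for $p>2$ we have $M^e\restr_p=M^{\sim(p-2)}$, and each application of the extension operator ${\sim}$ raises the number of variables by exactly one. Since a mean on $\bigcup_{p} I^p$ possesses a given property precisely when all of its restrictions $M^e\restr_p$ do, it suffices to verify each property for every restriction. The single engine behind every part of the proof is Theorem~\ref{thm:Ipproperties}, which pins down exactly which properties survive one application of the operator $N\mapsto\widetilde N$.

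The crucial observation is that Theorem~\ref{thm:Ipproperties} applies only to means that are simultaneously symmetric, continuous, and strict; consequently these three properties must be transported \emph{together} and established before anything else. I would therefore open with a single induction proving that every iterate $M^{\sim k}$ is symmetric, continuous, and strict. The base case is $M^e\restr_2=M$, which has these properties by hypothesis (and $M^e\restr_1=\mathrm{id}$ trivially does). For the inductive step, assuming $M^{\sim(p-2)}=M^e\restr_p$ is symmetric, continuous, and strict, parts \eqref{thm:Ipproperties.cont}, \eqref{thm:Ipproperties.symm}, \eqref{thm:Ipproperties.strict} of Theorem~\ref{thm:Ipproperties} yield at once that $M^{\sim(p-1)}=M^e\restr_{p+1}$ is again symmetric, continuous, and strict. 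This one induction simultaneously proves parts (a), (b), (c) of the proposition and secures the standing hypotheses required for all the remaining parts.

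With this trio in hand, the remaining properties follow by parallel inductions of the same shape. For monotonicity (part (d)), adding the assumption that $M$ is monotone, part \eqref{thm:Ipproperties.monot} propagates monotonicity through each iterate. For positive homogeneity (part (e), with $I=(0,+\infty)$), part \eqref{thm:Ipproperties.homog} does the same. Convexity or concavity (part (f)) is slightly more delicate: part \eqref{thm:Ipproperties.conv} of Theorem~\ref{thm:Ipproperties} preserves convexity only for means that are \emph{both} convex and monotone, so one must carry monotonicity and convexity jointly along the induction rather than treating convexity in isolation.

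The argument is in essence bookkeeping, and I expect no genuine obstacle. The one point demanding care—and the only place where a naive induction could fail—is precisely the bundling described above: the trio (symmetry, continuity, strictness) must be re-established at every stage before the next application of the operator is even legitimate, and convexity can be transmitted only in tandem with monotonicity. Once the properties are grouped correctly, each inductive step is an immediate invocation of the corresponding clause of Theorem~\ref{thm:Ipproperties}.
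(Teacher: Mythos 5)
Your proposal is correct and follows essentially the same route as the paper: induction on the arity, invoking Theorem~\ref{thm:Ipproperties} at each step to transport every property through one application of the extension operator, exactly as the paper does for continuity before declaring the remaining parts analogous. Your version is in fact slightly more careful than the paper's, since you make explicit the point the paper leaves implicit: the trio of symmetry, continuity and strictness must be propagated jointly so that $(M^e\restr_p)^{\sim}$ is even well defined at each inductive stage, and convexity must travel together with monotonicity.
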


\begin{proof}
To show that $M^e$ is continuous recall that, in view of Theorem~\ref{thm:Ipproperties} part \eqref{thm:Ipproperties.cont}, the $\beta$-invariant extension operator preserves continuity. Therefore, once we know that $M^e\restr_p$ is continuous for certain $p\ge 2$, then so is $M^e\restr_{p+1}=(M^e\restr_p)^{\sim}$.  Thus, since $M^e\restr_2=M$ is continuous, by simple induction we find out that $M^e\restr_p$ is continuous for all $p\ge 2$ which is equivalent to the continuity of $M^e$ and completes the (a) part of the proof.

Analogously, applying other parts of Theorem~\ref{thm:Ipproperties}, we can show all the remaining parts of this statement.
\end{proof}

Next, we show that this operator is monotone with respect to the ordering of means.
\begin{prop}\label{prop:e-pres-ineq}
Let $M,\ N \colon I^2 \to I$ be symmetric, continuous, monotone, and strict means. Then $M \le N$ if and only if $M^e \le N^e$.
\end{prop}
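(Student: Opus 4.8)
The plan is to prove the two implications separately, with essentially all the substance in the forward direction and the converse being immediate.

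For the easy implication, suppose $M^e \le N^e$. Since by definition $M^e\restr_2 = M$ and $N^e\restr_2 = N$, restricting the inequality $M^e \le N^e$ to the two-variable level gives $M = M^e\restr_2 \le N^e\restr_2 = N$ at once. This settles the ``if'' part.

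For the forward implication, assume $M \le N$ and prove $M^e\restr_p \le N^e\restr_p$ for every $p \ge 1$ by induction on $p$. The base cases $p=1$ (where both restrictions are the projection $x \mapsto x_1$, so equality holds) and $p=2$ (where the restrictions are $M$ and $N$ themselves) are immediate from the assumption. For the inductive step I would invoke the previous proposition on the properties of the iterative $\beta$-invariant extension: since $M$ and $N$ are symmetric, continuous, monotone, and strict bivariate means, all of their restrictions $M^e\restr_p$ and $N^e\restr_p$ inherit these four properties. Consequently the pair $\big(M^e\restr_p,\, N^e\restr_p\big)$ satisfies the hypotheses of Proposition~\ref{prop:Ip-comp}, and the inductive hypothesis $M^e\restr_p \le N^e\restr_p$ together with the identities $M^e\restr_{p+1} = \big(M^e\restr_p\big)^\sim$ and $N^e\restr_{p+1} = \big(N^e\restr_p\big)^\sim$ yields $M^e\restr_{p+1} \le N^e\restr_{p+1}$. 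As $p$ is arbitrary, this gives $M^e \le N^e$.

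The only point that requires care, rather than a genuine obstacle, is that Proposition~\ref{prop:Ip-comp} needs both means of the pair to be \emph{simultaneously} symmetric, continuous, monotone, and strict; hence the induction cannot run on the comparability inequality alone but must carry along these four structural properties at every level. This is precisely what the preceding properties proposition supplies, applied to $M$ and to $N$ separately, so the whole argument reduces to a routine induction with no computational content.
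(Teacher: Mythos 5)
Your proof is correct and follows essentially the same route as the paper: the converse by restricting to $I^2$, and the forward direction by induction on $p$, using Theorem~\ref{thm:Ipproperties} to ensure that each iterate $M^e\restr_p$, $N^e\restr_p$ remains symmetric, continuous, monotone, and strict so that Proposition~\ref{prop:Ip-comp} can be applied at every level. Your explicit remark that the induction must carry these structural properties along (rather than the inequality alone) is exactly the point the paper handles by first observing that all iterates inherit the four properties.
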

\begin{proof}
First observe that, by Theorem~\ref{thm:Ipproperties} parts (a)--(c) and (f), all iterates $M^{\sim p}$ and $N^{\sim p}$ ($p \in \N$) are symmetric, continuous, monotone, and strict.

If $M\le N$ then, by Proposition~\ref{prop:Ip-comp}, one can (inductively) show that $M^{\sim p} \le N^{\sim p}$ for all $p \ge 1$, which is equivalent to the inequality $M^e\le N^e$.

Conversely, if $M^e \le N^e$ then $M=M^e\restr_2\le N^e\restr_2=N$.
\end{proof}

\subsection{Conjugacy of means}
For $n \in \N$, a mean $M \colon I^p \to I$ and a continuous, monotone function $\varphi \colon J \to I$
we define a \emph{conjugated mean} $M^{[\varphi]} \colon J^p \to J$ by
\Eq{*}{
M^{[\varphi]}(x_1,\dots,x_p):=\varphi^{-1} \circ M\big(\varphi(x_1),\dots,\varphi(x_p)\big).
}

Conjugacy of means is a generalization of the concept of quasiarithmetic means. As a matter of fact, one can easily check that quasiarithmetic means are simply conjugates of the arithmetic mean. In the next statement, we prove that iterative $\beta$-invariant extensions commute with conjugacy.

\begin{prop}
Let $I,J \subset \R$ be intervals and  $M \colon I^2\to I$ be a symmetric, continuous, monotone, strict mean, and $\varphi \colon J \to I$ be a continuous and strictly monotone function. Then $(M^e)^{[\varphi]}=(M^{[\varphi]})^e$.
\end{prop}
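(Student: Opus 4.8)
The plan is to reduce the whole statement to a single commutation fact at the level of one extension step, and then lift it to the iterative extension by induction on the arity. The fact I would isolate as a lemma is: for every symmetric, continuous, strict $p$-variable mean $N \colon I^p \to I$ and every continuous, strictly monotone $\varphi \colon J \to I$, one has $(\widetilde N)^{[\varphi]} = \widetilde{N^{[\varphi]}}$, i.e.\ conjugation by $\varphi$ commutes with the $\beta$-invariant extension operator $\sim$.

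To prove this lemma I would lean entirely on the uniqueness clause of Proposition~\ref{prop:int}. First I would observe that $N^{[\varphi]}$ inherits symmetry, continuity, and strictness from $N$ (strictness is where strict monotonicity of $\varphi$ is used), so that $\widetilde{N^{[\varphi]}}$ is a well-defined mean. Next I would check that the candidate $(\widetilde N)^{[\varphi]}$ is a genuine $(p+1)$-variable mean on $J$ and that it satisfies the functional equation \eq{E:Ms} with $M$ replaced by $N^{[\varphi]}$. This last point is a direct computation: setting $w=(\varphi(y_1),\dots,\varphi(y_{p+1}))$ and using $\varphi\big(N^{[\varphi]}(y^{\vee k})\big)=N(w^{\vee k})$, the left-hand side of \eq{E:Ms} rewrites as $\varphi^{-1}\big(\widetilde N(N(w^{\vee1}),\dots,N(w^{\vee(p+1)}))\big)$, which by the defining invariance of $\widetilde N$ applied to $w$ collapses to $\varphi^{-1}\big(\widetilde N(w)\big)=(\widetilde N)^{[\varphi]}(y)$. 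Since $(\widetilde N)^{[\varphi]}$ thus solves the equation and is a mean, the uniqueness part of Proposition~\ref{prop:int} forces $(\widetilde N)^{[\varphi]}=\widetilde{N^{[\varphi]}}$.

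Granting the lemma, I would finish by induction on $p$, proving $(M^e)^{[\varphi]}\restr_p=(M^{[\varphi]})^e\restr_p$ for all $p$. Conjugation acts arity by arity, so $(M^e)^{[\varphi]}\restr_p=(M^e\restr_p)^{[\varphi]}$, and both iterative extensions obey the recursion $(\,\cdot\,)\restr_{p+1}=\big((\,\cdot\,)\restr_p\big)^\sim$ for $p\ge2$. The cases $p=1$ and $p=2$ are immediate, since $M^e\restr_2=M$ and the $e$-operator leaves a bivariate mean unchanged, whence both sides restrict to $M^{[\varphi]}$ at level $2$. For the inductive step I would chain the identities $(M^e)^{[\varphi]}\restr_{p+1}=\big((M^e\restr_p)^\sim\big)^{[\varphi]}$, then apply the lemma to get $\big((M^e\restr_p)^{[\varphi]}\big)^\sim$, then invoke the inductive hypothesis to replace $(M^e\restr_p)^{[\varphi]}$ by $(M^{[\varphi]})^e\restr_p$, and finally recognise the outcome as $(M^{[\varphi]})^e\restr_{p+1}$.

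The only genuinely nontrivial point — and hence the main obstacle — is the functional-equation verification inside the lemma; once the bookkeeping of the componentwise conjugation and the deletion operation $(\,\cdot\,)^{\vee k}$ is carried out carefully, the identity $\varphi\big(N^{[\varphi]}(y^{\vee k})\big)=N(w^{\vee k})$ does all the work and the rest is formal. A secondary point to dispatch cleanly is that conjugation preserves symmetry, continuity, and strictness, so that $\widetilde{N^{[\varphi]}}$ exists and the uniqueness argument is legitimate; this is routine but must be stated, since it underwrites every step of the induction.
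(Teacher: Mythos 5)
Your proposal is correct, and it shares the paper's overall decomposition --- isolate the one-step commutation identity $(\widetilde N)^{[\varphi]}=(N^{[\varphi]})^\sim$ as a lemma, then lift it to $M^e$ by induction on the arity --- but it proves that key lemma by a genuinely different mechanism. The paper argues dynamically: it forms the barycentric mean-type mappings ${\bf M}$ and ${\bf P}$ attached to $N$ and $N^{[\varphi]}$, observes the conjugacy ${\bf P}=\Phi^{-1}\circ{\bf M}\circ\Phi$, hence ${\bf P}^n=\Phi^{-1}\circ{\bf M}^n\circ\Phi$ for all $n$, and passes to the limit $n\to\infty$ using Proposition~\ref{prop:oldmain} part \eqref{2.D}. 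You argue statically: you verify by direct computation that $(\widetilde N)^{[\varphi]}$ is a mean solving the invariance equation \eq{E:Ms} for $N^{[\varphi]}$, and then invoke the uniqueness clause of Proposition~\ref{prop:int}. Both routes ultimately rest on Proposition~\ref{prop:oldmain} --- yours through part \eqref{2.A} (via Proposition~\ref{prop:int}), the paper's through part \eqref{2.D} --- and both need the same bookkeeping, namely that the class of symmetric, continuous, strict means is closed under $\sim$ (Theorem~\ref{thm:Ipproperties}(a)--(c)) and under conjugation, so that the lemma is applicable at every stage of the induction; you state the second closure property explicitly, but you should also cite the first, since the inductive step applies the lemma to $N=M^e\restr_p$. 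What your route buys is brevity and transparency: no limit argument, only the defining invariance of $\widetilde N$ plus uniqueness, which is arguably the intended use of Proposition~\ref{prop:int}. What the paper's route buys is the stronger intermediate fact that the entire orbits are conjugate, which identifies the two limits coordinatewise and would remain usable in settings where power convergence is known but a clean uniqueness statement is not available. A final remark: neither argument uses the monotonicity hypothesis on $M$; your version makes this visible, since Proposition~\ref{prop:int} requires only symmetry, continuity and strictness --- and strictness is in fact also needed (though not listed) in the paper's statement of \eq{E:332}, since its proof invokes Proposition~\ref{prop:oldmain}.
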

\begin{proof}
 First, we show that $\beta$-invariant extension commutes with conjugates. More precisely for every symmetric, continuous and monotone mean $N \colon I^p \to I$  we have \Eq{E:332}{
 (\widetilde N)^{[\varphi]}=(N^{[\varphi]})^\sim.
 }
 Indeed, define the mappings 
 \Eq{*}{
 {\bf M}\colon I^{p+1} \ni x&\mapsto \big(N(x^{\vee1}),\dots,N(x^{\vee(p+1)})\big) \in I^{p+1};\\
 {\bf P}\colon J^{p+1} \ni y&\mapsto \big(N^{[\varphi]}(y^{\vee1}),\dots,N^{[\varphi]}(y^{\vee(p+1)})\big) \in J^{p+1};\\
  \Phi\colon I^{p+1} \ni x&\mapsto\big(\varphi(x_1),\dots,\varphi(x_{p+1})\big) \in J^{p+1}.
 }
 Then, for all $y \in J^{p+1}$, we have
 \Eq{*}{
\Phi \circ {\bf P} (y)&=\Phi\big(N^{[\varphi]}(y^{\vee1}),\dots,N^{[\varphi]}(y^{\vee(p+1)})\big)\\
&=\big( N(\Phi (y)^{\vee1}),\dots N(\Phi (y)^{\vee(p+1)})\big)={\bf M}\circ \Phi (y).
}
Therefore ${\bf P} = \Phi^{-1} \circ {\bf M} \circ \Phi$, and whence 
\Eq{E:347}{
{\bf P}^n=\Phi^{-1} \circ {\bf M}^n \circ \Phi\text{ for all }n\ge 1.
}
By Proposition~\ref{prop:oldmain} part \eqref{2.D} we know that the sequence of iterates $({\bf M}^n)_{n=1}^\infty$ converge to $\widetilde N$ on each coordinate while $({\bf P}^n)_{n=1}^\infty$ converge to $(N^{[\varphi]})^{\sim}$ on each coordinate. Whence if we take the limit $n \to \infty$, equality \eq{E:347} implies $(N^{[\varphi]})^{\sim}=\varphi^{-1} \circ \widetilde N \circ \Phi=(\widetilde N)^{[\varphi]}$. Therefore \eq{E:332} holds.

By \eq{E:332} we easily obtain $ (M^{\sim p})^{[\varphi]}=(M^{[\varphi]})^{\sim p}$ for all $p \ge 1$, which implies
$(M^e)^{[\varphi]}=(M^{[\varphi]})^{e}$.
\end{proof}

\section{Applications to classical families of means}
\subsection{Quasiarithmetic envelopes}\label{sec:QA}
Quasi-arithmetic means were introduced as a generalization of Power Means in the 1920s/30s in a series of nearly simultaneous papers by de Finetti \cite{Def31}, Knopp \cite{Kno28},  Kolmogorov\cite{Kol30}, and Nagumo \cite{Nag30}. For an interval $I$ and a continuous and strictly monotone function $f \colon I \to \R$ (from now on $\CM(I)$ is a family of continuous, strictly monotone functions on $I$) we define \emph{quasiarithmetic mean} $\QA{f} \colon \bigcup_{n=1}^\infty I^n \to I$  by 
\Eq{*}{
\QA{f}(x_1,\dots,x_n):=f^{-1}\left( \frac{f(x_1)+f(x_2)+\cdots+f(x_n)}{n} \right),
}
where $n \in \N$ and $x_1,\dots,x_n \in I$. The function $f$ is called a \emph{generator} of the quasiarithmetic mean.

It is well known that for
$I=\R_+$, $\pi_r(t):=t^r$ for $r\ne 0$ and $\pi_0(t):=\ln t$, then mean $\QA{\pi_r}$ coincides with the $r$-th power mean (this fact had been already noticed by Knopp \cite{Kno28}).

There were a number of results related to quasi-arithmetic means. For example (see \cite{HarLitPol34}) $\QA{f}=\QA{g}$ if and only if their generators are affine transformations of each other, i.e. there exist $\alpha,\,\beta \in \R$ such that $g=\alpha f + \beta$.

Generalizing the approach from the paper \cite{Pas20a}, we are going to introduce quasiarithmetic envelopes. First, for a given mean $M \colon I^p\to I$ we define sets of quasiarithmetic means which are below and above $M$. More precisely let
\Eq{*}{
\sF^-(M)&:=\{f \in \CM(I) \colon \QA{f}(x)\le M(x)\text{ for all }x \in I^p\};\\
\sF^+(M)&:=\{f \in \CM(I) \colon \QA{f}(x)\ge M(x)\text{ for all }x \in I^p\}.
}

Now, for a given mean $M \colon I^p \to I$, we define \emph{local lower} and \emph{upper quasiarithmetic envelopes} $\LL_M,\UU_M \colon I^p \to I$ by
\Eq{*}{
\LL_M(x)&:=
\begin{cases}
\sup\{\QA{f}(x)\colon f \in \sF^-(M)\} & \text{ if }\sF^-(M)\ne \emptyset,\\
\min(x) & \text{ otherwise};
\end{cases}\\
\UU_M(x)&:=
\begin{cases}
\inf\{\QA{f}(x)\colon f \in \sF^+(M)\} & \text{ if }\sF^+(M)\ne \emptyset,\\
\max(x) & \text{ otherwise}.
\end{cases},
}
for all $x \in I^p$.

In the case $M \colon \bigcup_{p=1}^\infty I^p \to I$ we have two approaches to this problem. First, similarly to the previous case, we define two pairs $\LL_M,\UU_M \colon \bigcup_{p=1}^\infty I^p \to I$ by
\Eq{*}{
\LL_M(x_1,\dots,x_p)&:=\LL_{M\,\restr_p}(x_1,\dots,x_p), \\
\UU_M(x_1,\dots,x_p)&:=\UU_{M\,\restr_p}(x_1,\dots,x_p), 
}
where $p \in \N$ is arbitrary and $x_1,\dots,x_p \in I$.

On the other hand, we can repeat the previous setting from the beginning and define
\Eq{*}{
\sG^-(M)&:=\{f \in \CM(I) \colon \QA{f}(x)\le M(x)\text{ for all }x \in \bigcup_{p=1}^\infty I^p\};\\
\sG^+(M)&:=\{f \in \CM(I) \colon \QA{f}(x)\ge M(x)\text{ for all }x \in \bigcup_{p=1}^\infty I^p\}.
}

Then we set \emph{(global) lower} and \emph{upper quasiarithmetic envelopes} as follows. Let $\LLG_M,\UUG_M \colon \bigcup_{p=1}^\infty I^p \to I$ be given by
\Eq{*}{
\LLG_M(x)&:=
\begin{cases}
\sup\{\QA{f}(x)\colon f \in \sG^-(M)\} & \text{ if }\sG^-(M)\ne \emptyset,\\
\min(x) & \text{ otherwise};
\end{cases}\\
\UUG_M(x)&:=
\begin{cases}
\inf\{\QA{f}(x)\colon f \in \sG^+(M)\} & \text{ if }\sG^+(M)\ne \emptyset,\\
\max(x) & \text{ otherwise}.
\end{cases},
}
for all $x \in \bigcup_{p=1}^\infty I^p$.

Now we are going to show a few basic properties of operators $\LL$, $\UU$ and $\LLG$, $\UUG$. We bind them into two propositions. The first one, refers to $\LL$ and $\UU$, while the second to $\LLG$ and $\UUG$.

\begin{prop}\label{prop:PLU} The following properties are valid:
\begin{enumerate}[{\rm (i)}]
\item \label{PLU.1} For every mean $M \colon I^p \to I$ we have $\LL_M \le M \le \UU_M$.
\item \label{PLU.2} Let $M,N \colon I^p \to I$ be two means with $M \le N$. Then $\LL_M \le \LL_N$ and $\UU_M\le \UU_N$.
\item \label{PLU.3} For every $\varphi \in\CM(I)$ we have $\LL_{\QA{\varphi}}=\UU_{\QA{\varphi}}=\QA{\varphi}$.
\item \label{PLU.4} For every mean $M \colon I^p \to I$ and a monotone, continuous function $\varphi \colon J \to I$ we have $\LL_M^{[\varphi]}=\LL_{M^{[\varphi]}}$ and $\UU_M^{[\varphi]}=\UU_{M^{[\varphi]}}$.
\end{enumerate}
\end{prop}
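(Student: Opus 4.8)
The plan is to dispatch \eqref{PLU.1}–\eqref{PLU.3} quickly from the definitions, and to invest the real effort in the conjugacy identity \eqref{PLU.4}. Throughout I would use two elementary facts: the index families are monotone in $M$, and every quasiarithmetic mean obeys $\min(x)\le \QA f(x)\le \max(x)$, so the degenerate branches (where $\sF^\pm(M)=\emptyset$) of $\LL_M$ and $\UU_M$ stay compatible with the generic branches.

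For \eqref{PLU.1}: when $\sF^-(M)\neq\emptyset$, every competitor satisfies $\QA f(x)\le M(x)$, so the supremum gives $\LL_M(x)\le M(x)$; when $\sF^-(M)=\emptyset$ we get $\LL_M(x)=\min(x)\le M(x)$ since $M$ is a mean, and $M\le\UU_M$ is symmetric. For \eqref{PLU.2} the point is that $M\le N$ forces $\sF^-(M)\subseteq\sF^-(N)$ and $\sF^+(N)\subseteq\sF^+(M)$; enlarging the set over which one takes a supremum (respectively an infimum) then yields $\LL_M\le\LL_N$ and $\UU_M\le\UU_N$, the finitely many empty-family cases being absorbed by the bound $\min(x)\le\QA f(x)\le\max(x)$. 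Part \eqref{PLU.3} is then immediate: for $M=\QA\varphi$ one has $\varphi\in\sF^-(M)\cap\sF^+(M)$, so $\QA\varphi(x)$ is a competitor in both extrema; together with \eqref{PLU.1} this forces $\LL_M=\UU_M=\QA\varphi$.

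The substance is \eqref{PLU.4}, and my approach rests on one algebraic identity plus an interchange of $\varphi^{-1}$ with an extremum. First I would record that conjugation acts on generators by composition, $(\QA f)^{[\varphi]}=\QA{f\circ\varphi}$ (obtained by unwinding both definitions), and that $f\mapsto f\circ\varphi$ is a bijection of $\CM(I)$ onto $\CM(J)$ with inverse $g\mapsto g\circ\varphi^{-1}$. Taking $\varphi$ increasing, conjugation preserves the pointwise ordering of means, so this bijection carries $\sF^-(M)$ exactly onto $\sF^-(M^{[\varphi]})$ and $\sF^+(M)$ onto $\sF^+(M^{[\varphi]})$. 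With the index families matched, $\LL_{M^{[\varphi]}}(x)$ becomes the supremum of $(\QA f)^{[\varphi]}(x)=\varphi^{-1}\big(\QA f(\varphi(x))\big)$ over $f\in\sF^-(M)$, where $\varphi(x):=(\varphi(x_1),\dots,\varphi(x_p))$. Since $\varphi^{-1}$ is continuous and increasing it commutes with the supremum, so this equals $\varphi^{-1}\big(\sup_f\QA f(\varphi(x))\big)=\varphi^{-1}\big(\LL_M(\varphi(x))\big)=\LL_M^{[\varphi]}(x)$; the identical computation with $\inf$ and $\sF^+$ gives the statement for $\UU$, and the empty branches agree because $\min$ and $\max$ are invariant under conjugation by an increasing $\varphi$.

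I expect the main obstacle to be the interplay between the monotonicity direction of $\varphi$ and the extremum interchange. Both the correspondence $\sF^\pm(M)\leftrightarrow\sF^\pm(M^{[\varphi]})$ and the step ``$\varphi^{-1}$ commutes with $\sup$'' use that $\varphi$, hence $\varphi^{-1}$, is increasing; for a decreasing $\varphi$ the ordering of means reverses and $\sup$/$\inf$ swap, so I would either state \eqref{PLU.4} for increasing $\varphi$ or carefully track this reversal. A second technical point is that I would want $\varphi$ to be a homeomorphism of $J$ onto $I$, so that $f\mapsto f\circ\varphi$ is genuinely a bijection and the inequality defining $\sF^-(M)$ on all of $I^p$ transfers faithfully to $J^p$; without surjectivity this transfer is the delicate step.
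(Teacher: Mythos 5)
Your proof is correct and takes essentially the same route as the paper's: parts \eqref{PLU.1}--\eqref{PLU.3} via the empty/non-empty case split and the membership $\varphi\in\sF^-(\QA{\varphi})\cap\sF^+(\QA{\varphi})$, and part \eqref{PLU.4} via the correspondence $f\mapsto f\circ\varphi$ identifying $\sF^\pm(M^{[\varphi]})$ with $\{g\circ\varphi\colon g\in\sF^\pm(M)\}$, followed by pulling the increasing map $\varphi^{-1}$ through the supremum/infimum. The caveats you flag (the order reversal for decreasing $\varphi$ and the need for $\varphi$ to map $J$ onto $I$) are genuine but apply equally to the paper, whose own proof of \eqref{PLU.4} simply assumes $\varphi$ strictly increasing and uses the substitution $x\mapsto\varphi(x)$ implicitly requiring surjectivity.
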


\begin{proof}
We restrict the proof of this proposition to results concerning lower envelope. Parts concerning the upper envelope are analogous. 

To show \eq{PLU.1}, fix $x\in I^p$ be arbitrary. If $\sF^-(M)=\emptyset$ then 
\Eq{*}{
\LL_M(x)=\min(x)\le M(x).
}
Otherwise we have $\QA{f}(x) \le M(x)$ for all $f \in \sF^-(M)$ which yields 
\Eq{*}{
\LL_M(x)=\sup\{\QA{f}(x)\colon f \in \sF^-(M)\}  \le M(x),
}
which validates the inequality $\LL_M \le M$ and completes the proof of part~\eq{PLU.1}.

To prove part \eq{PLU.2}, it is sufficient to observe that, under the assumption $M\le N$, we have $\sF^-(M) \subseteq \sF^-(N)$, which easily implies $\LL_M\le \LL_N$.

Next, for every  $\varphi \in\CM(I)$ we have $\varphi \in \sF^-(\QA{\varphi})$, which shows the inequality $\LL_{\QA{\varphi}}\ge \QA{\varphi}$. Therefore $\LL_{\QA{\varphi}}= \QA{\varphi}$, since the second inequality has been already proved in \eq{PLU.1}. Thus we get \eq{PLU.3}.

Finally, in order to prove \eq{PLU.4} assume that $\varphi$ is strictly increasing and note that
\Eq{*}{
\sF^-(M^{[\varphi]})&=\{f \in \CM(J) \colon \QA{f}(x)\le M^{[\varphi]}(x)\text{ for all }x \in J^p\}\\
&=\{f \in \CM(J) \colon \QA{f\circ \varphi^{-1}}(x)\le M(x)\text{ for all }x \in I^p\}\\
&=\{f \in \CM(J) \colon f\circ \varphi^{-1} \in \sF^-(M)\}\\
&=\{g \circ \varphi \in \CM(J) \colon g \in \sF^-(M)\}.
}
Therefore either $\sF^-(M^{[\varphi]})=\sF^-(M)=\emptyset$ and the equality is trivial or for every $x=(x_1,\dots,x_p)\in I^p$ we have
\Eq{*}{
\LL_{M^{[\varphi]}}(x)
&=\sup\{\QA{f}(x)\colon f \in \sF^-(M^{[\varphi]})\}\\
&=\sup\{\QA{g \circ \varphi}(x)\colon g \in \sF^-(M)\}\\
&=\sup\big\{\varphi^{-1}\big(\QA{g}\big(\varphi(x_1),\dots,\varphi(x_p)\big)\big)\colon g \in \sF^-(M)\big\}\\
&=\varphi^{-1}\Big(\sup\big\{\QA{g}\big(\varphi(x_1),\dots,\varphi(x_p)\big)\colon g \in \sF^-(M)\big\}\Big)
=\LL_M^{[\varphi]}(x),
}
and we get \eq{PLU.4}, which was the last unproved part of this statement.
\end{proof}

In the same spirit, we can establish the analogous result for the global envelopes

\begin{prop}\label{prop:PLU-G} The following properties are valid:
\begin{enumerate}[{\rm (i)}]
\item \label{PLU-G.1} For every mean $M \colon \bigcup_{p=1}^\infty I^p \to I$ we have $\LLG_M \le M \le \UUG_M$.
\item \label{PLU-G.2} Let $M,N \colon \bigcup_{p=1}^\infty I^p \to I$ be a symmetric, continuous and strict means with $M \le N$. Then $\LLG_M \le \LLG_N$ and $\UUG_M\le \UUG_N$.
\item \label{PLU-G.3} For every $\varphi \in\CM(I)$ we have $\LLG_{\QA{\varphi}}=\UUG_{\QA{\varphi}}=\QA{\varphi}$.
\item \label{PLU-G.4} For every mean $M \colon I^p \to I$ and a monotone, continuous function $\varphi \colon J \to I$ we have $\LLG_M^{[\varphi]}=\LLG_{M^{[\varphi]}}$ and $\UUG_M^{[\varphi]}=\UUG_{M^{[\varphi]}}$.
\end{enumerate}
\end{prop}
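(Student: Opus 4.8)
The plan is to mirror the proof of Proposition~\ref{prop:PLU} almost verbatim, replacing the one-arity families $\sF^\pm(M)$ by their global counterparts $\sG^\pm(M)$ and the fixed-arity domain $I^p$ (resp. $J^p$) by $\bigcup_{p=1}^\infty I^p$ (resp. $\bigcup_{p=1}^\infty J^p$). The reason this transfer succeeds is that none of the four arguments in the earlier proof exploited the fact that a single arity was fixed: each step is either a pointwise comparison or a purely set-theoretic manipulation of the defining families, both insensitive to the range of arities over which the defining inequalities are imposed. As before, I would write out only the lower-envelope arguments, the upper-envelope ones being symmetric.

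For part \eqref{PLU-G.1}, I would fix $x \in \bigcup_{p=1}^\infty I^p$ and split into the two cases of the definition of $\LLG_M$: if $\sG^-(M)=\emptyset$ then $\LLG_M(x)=\min(x)\le M(x)$, and otherwise $\QA{f}(x)\le M(x)$ holds for every $f \in \sG^-(M)$, so passing to the supremum gives $\LLG_M(x)\le M(x)$. For part \eqref{PLU-G.2}, the key observation is the monotonicity of the family assignment: $M \le N$ forces $\sG^-(M)\subseteq \sG^-(N)$, so the supremum defining $\LLG_M$ is taken over a smaller set and $\LLG_M\le \LLG_N$ follows at once (with the convention handling the empty case via $\QA{f}(x)\ge \min(x)$); here the extra regularity hypotheses on $M$ and $N$ are not actually needed beyond the mean property. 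For part \eqref{PLU-G.3}, I would note that $\varphi \in \sG^-(\QA{\varphi})$ trivially, since $\QA{\varphi}(x)\le \QA{\varphi}(x)$ for every $x$; this yields $\LLG_{\QA{\varphi}}\ge \QA{\varphi}$, and combining with part \eqref{PLU-G.1} gives equality.

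Part \eqref{PLU-G.4} is the only computational one and is where I expect the single genuine point of care. Following the template of part \eqref{PLU.4}, I would first establish the set identity $\sG^-(M^{[\varphi]})=\{g\circ\varphi \colon g\in \sG^-(M)\}$, using the conjugacy relation $\QA{g\circ\varphi}=(\QA{g})^{[\varphi]}$ together with the strict monotonicity of $\varphi$ to transport the defining inequalities between $\bigcup_{p=1}^\infty J^p$ and $\bigcup_{p=1}^\infty I^p$. Once this identity is available, pulling $\varphi^{-1}$ through the supremum — justified by the monotonicity and continuity of $\varphi$ — converts $\LLG_{M^{[\varphi]}}$ into $\LLG_M^{[\varphi]}$ exactly as in the earlier computation. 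The main obstacle, such as it is, is purely bookkeeping: one must verify that the coordinatewise substitution $y=\varphi(x)$ carries the global quantifier ``for all $x \in \bigcup_{p=1}^\infty J^p$'' onto the matching quantifier over $\bigcup_{p=1}^\infty I^p$, which is immediate since $\varphi$ acts in each coordinate separately and the arity is preserved under the substitution.
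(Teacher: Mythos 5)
Your proposal is correct and takes essentially the same approach as the paper: the paper's entire proof of this proposition is the remark that it follows the lines of the proof of Proposition~\ref{prop:PLU} with $\sF^\pm(M)$ replaced by $\sG^\pm(M)$, which is precisely the transfer you carry out (in somewhat greater detail, including the harmless observation that the extra regularity hypotheses in part~(ii) are not actually used).
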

Its proof follows the lines of proof of Proposition~\ref{prop:PLU}, but we need to replace the set $\sF^-(M)$ by $\sG^-(M)$.

\medskip 

In the next lemma, we prove that the $\beta$-invariant extension of a bivariate quasiaritmetic mean is the quasiarithmetic mean generated by the same function.
\begin{lem}
If $\varphi \in \CM(I)$ then $\big(\QA{\varphi}\restr_2\!\big)^e=\QA{\varphi}$.
\end{lem}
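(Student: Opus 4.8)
The plan is to combine the uniqueness half of Proposition~\ref{prop:int} with the classical fact, recalled in the introduction as equation \eq{E:152}, that a quasiarithmetic mean is invariant under its own barycentric operator in every arity. Concretely, I would prove by induction on $p$ that $\QA{\varphi}\restr_{p+1}=\big(\QA{\varphi}\restr_p\big)^\sim$ for all $p\ge 2$, and then read off the claim from the definition \eq{E:MsR} of the iterative extension.

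First I would verify that every restriction $\QA{\varphi}\restr_p$ satisfies the standing hypotheses of Proposition~\ref{prop:int}. Symmetry is immediate from the defining formula; continuity follows from continuity of $\varphi$ and of $\varphi^{-1}$; and strictness holds because for a non-constant vector the values $\varphi(x_1),\dots,\varphi(x_p)$ are not all equal, so their arithmetic mean lies strictly between $\min_i\varphi(x_i)$ and $\max_i\varphi(x_i)$, and $\varphi^{-1}$ carries this strict inequality back to $I$. Consequently, for each $p\ge 2$ the $\beta$-invariant extension $\big(\QA{\varphi}\restr_p\big)^\sim$ is defined and is characterised uniquely (in the family of means) as the only solution $K\colon I^{p+1}\to I$ of \eq{E:Ms} with $M=\QA{\varphi}\restr_p$.

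The heart of the argument is to check that $K:=\QA{\varphi}\restr_{p+1}$ is itself such a solution. Put $S:=\sum_{i=1}^{p+1}\varphi(x_i)$. Then $\varphi\bigl(\QA{\varphi}(x^{\vee j})\bigr)=\tfrac1p\bigl(S-\varphi(x_j)\bigr)$ for each $j$, and summing over $j\in\N_{p+1}$ telescopes to $\sum_{j=1}^{p+1}\varphi\bigl(\QA{\varphi}(x^{\vee j})\bigr)=\tfrac1p\bigl((p+1)S-S\bigr)=S$. Dividing by $p+1$ and applying $\varphi^{-1}$ gives precisely $\varphi^{-1}\bigl(S/(p+1)\bigr)=\QA{\varphi}(x_1,\dots,x_{p+1})$, i.e.\ $K$ solves \eq{E:Ms}. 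By the uniqueness clause of Proposition~\ref{prop:int}, it follows that $\QA{\varphi}\restr_{p+1}=\big(\QA{\varphi}\restr_p\big)^\sim$.

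Finally I would iterate this identity. Starting from $p=2$ and applying the relation just established repeatedly yields $\big(\QA{\varphi}\restr_2\big)^{\sim(p-2)}=\QA{\varphi}\restr_p$ for every $p>2$; together with the trivial agreement in arities $1$ and $2$, comparison with \eq{E:MsR} gives $\big(\QA{\varphi}\restr_2\big)^e=\QA{\varphi}$ on all of $\bigcup_{p=1}^\infty I^p$. I expect no serious obstacle here: the only computational point is the telescoping identity, and the structural content is entirely carried by the uniqueness statement of Proposition~\ref{prop:int}.
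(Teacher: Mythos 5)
Your proposal is correct and follows essentially the same route as the paper: an induction on the arity in which the telescoping identity $\sum_{j}\varphi\bigl(\QA{\varphi}(x^{\vee j})\bigr)=\sum_{i}\varphi(x_i)$ shows that $\QA{\varphi}\restr_{p+1}$ solves the invariance equation \eq{E:Ms} for $M=\QA{\varphi}\restr_p$, after which the uniqueness clause of Proposition~\ref{prop:int} identifies it with $\bigl(\QA{\varphi}\restr_p\bigr)^\sim$, and iteration via \eq{E:MsR} concludes. Your explicit verification of symmetry, continuity, and strictness of the restrictions is a small addition the paper leaves implicit, but the substance of the argument is identical.
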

\begin{proof}
Let $\varphi \in \CM(I)$ and denote briefly $M:=\QA{\varphi}\restr_2$. We prove by induction that 
\Eq{E343}{
M^e\restr_p=\QA{\varphi}\restr_p
}
for all $p \ge 1$. For $p=1$ and $p=2$, this statement is trivial. 

Now assume that \eq{E343} holds for some $p=p_0\ge 2$. Then 
\Eq{*}{
M^e\restr_{p_0+1}=M^{\sim({p_0}-1)}=(M^{\sim({p_0}-2)})^{\sim}=(M^e\restr_{p_0})^{\sim}=(\QA{\varphi}\restr_{p_0})^{\sim}.
}
Consequently, for all $x \in I^{p_0+1}$, we have
\Eq{*}{
& \QA{\varphi}\big(\QA{\varphi}\big(x^{\vee1}\big),\dots,\QA{\varphi}\big(x^{\vee({p_0}+1)}\big)\big)\\
&\qquad\qquad=
\QA{\varphi}\bigg(\varphi^{-1} \Big(\frac{\varphi(x_1)+\dots+\varphi(x_{{p_0}+1})-\varphi(x_1)}{p_0}\Big),\dots\,\\
&\qquad\qquad\qquad\qquad ,\varphi^{-1} \Big(\frac{\varphi(x_1)+\dots+\varphi(x_{{p_0}+1})-\varphi(x_{{p_0}+1})}{p_0}\Big)\bigg)\\
&\qquad\qquad=\varphi^{-1} \bigg(\frac{1}{{p_0}+1} \sum_{k=1}^{p_0+1} \frac{\varphi(x_1)+\dots+\varphi(x_{p_0+1})-\varphi(x_k)}{p_0}\bigg)\\
&\qquad\qquad=\varphi^{-1} \bigg(\frac{1}{p_0+1} \sum_{k=1}^{p_0+1} \varphi(x_k)\bigg)=\QA{\varphi}(x).
}
Thus $M^e\restr_{p_0+1}=(\QA{\varphi}\restr_{p_0})^{\sim}=\QA{\varphi}\restr_{p_0+1}$, i.e. \eq{E343} holds for $p=p_0+1$. By the induction we obtain that \eq{E343} holds for all $p\ge 1$, which is precisely the equality $(\QA{\varphi}\restr_2)^e=M^e=\QA{\varphi}$. 
\end{proof}





Now we proceed to the comparability between envelopes. First, we show that local envelopes approximate the mean better than global ones, as expected.

\begin{prop}\label{prop:LLGcomp}
For every mean $M \colon \bigcup_{p=1}^\infty I^p \to I$ we have 
\Eq{*}{
\LLG_M\le \LL_M \le M \le \UU_M\le \UUG_M.
}
\end{prop}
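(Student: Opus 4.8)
The plan is to reduce everything to set-inclusions between the families of quasiarithmetic minorants and majorants, since both pairs of envelopes are built as a supremum (resp.\ infimum) of quasiarithmetic means over such families. The two middle inequalities $\LL_M\le M\le\UU_M$ require no new work: fixing $p\in\N$ and a vector $x\in I^p$, they are exactly Proposition~\ref{prop:PLU}~\eqref{PLU.1} applied to the restriction $M\restr_p$, and by the very definition of $\LL_M$ and $\UU_M$ on $\bigcup_{p=1}^\infty I^p$ this yields $\LL_M\le M\le\UU_M$ globally.

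For the outer inequalities, I would isolate the one structural observation that does all the work, namely the inclusions
\Eq{*}{
\sG^-(M)\subseteq\sF^-(M\restr_p)\quad\text{and}\quad\sG^+(M)\subseteq\sF^+(M\restr_p)\qquad\text{for every }p\in\N.
}
These are immediate: a generator $f$ lying in $\sG^-(M)$ satisfies $\QA{f}(x)\le M(x)$ for \emph{every} $x$ in \emph{every} $I^q$, so in particular for $x\in I^p$, whence $f\in\sF^-(M\restr_p)$; the argument for $\sG^+$ is identical with the reversed inequality. Intuitively, the global constraint is stronger than the local one, so fewer generators survive it. Granting these inclusions, the bound $\LLG_M\le\LL_M$ follows because a supremum taken over a smaller index set cannot exceed the supremum over a larger one, and dually $\UU_M\le\UUG_M$ because an infimum over a smaller set is at least the infimum over a larger one. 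Concretely, fixing $x\in I^p$ with $\sG^-(M)\neq\emptyset$ (so that $\sF^-(M\restr_p)\neq\emptyset$ as well), one has
\Eq{*}{
\LLG_M(x)=\sup_{f\in\sG^-(M)}\QA{f}(x)\le\sup_{f\in\sF^-(M\restr_p)}\QA{f}(x)=\LL_M(x),
}
and the symmetric computation with $\inf$ in place of $\sup$ gives $\UU_M(x)\le\UUG_M(x)$.

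The only genuinely fiddly point — and the step I would treat most carefully — is the bookkeeping when one of the families is empty, since there the envelopes are defined by the fallback values $\min(x)$ and $\max(x)$ rather than by an extremum. If $\sG^-(M)=\emptyset$ then $\LLG_M(x)=\min(x)$, and because every quasiarithmetic mean (as well as either fallback value) is at least $\min(x)$, one still obtains $\LLG_M(x)=\min(x)\le\LL_M(x)$; the possible emptiness of $\sF^-(M\restr_p)$ is harmless, as the bound $\LL_M(x)\ge\min(x)$ holds in both branches of its definition. The dual remark handles $\sG^+(M)=\emptyset$ via $\UUG_M(x)=\max(x)\ge\UU_M(x)$. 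Beyond this case-split no real obstacle arises, so I expect the whole argument to be short and essentially formal.
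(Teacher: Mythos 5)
Your proof is correct and takes essentially the same approach as the paper's: the key inclusion $\sG^-(M)\subseteq\sF^-(M\restr_p)$ (and its dual), monotonicity of suprema and infima under enlarging the index set, and the separate check of the empty-family fallback cases. The only cosmetic difference is that you obtain the middle inequalities by citing Proposition~\ref{prop:PLU} part \eq{PLU.1} applied to $M\restr_p$, whereas the paper repeats that short argument inline.
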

\begin{proof}
Take $p \in \N$ and $x \in I^p$ arbitrarily. If $\sG^-(M)=\emptyset$ then $\LLG_M=\min \le \LL_M$. Otherwise, by $\sG^-(M)\subseteq \sF^-(M)$, we have
\Eq{*}{
\LLG_M(x)=\sup\{\QA{f}(x)\colon f \in \sG^-(M)\} \le \sup\{\QA{f}(x)\colon f \in \sF^-(M)\}=\LL_M(x),
}
which shows that 
\Eq{*}{
\LLG_M(x) \le \LL_M(x).
}

Similarly we obtain that either $\sF^-(M)=\emptyset$ and $\LL_M=\min$ or $\QA{f}(x) \le M(x)$ for all $f \in \sF^-(M)$, which yields
\Eq{*}{
\LL_M(x)=\sup\{\QA{f}(x)\colon f \in \sF^-(M)\}\le M(x),
}
which can be concluded as $\LL_M(x)\le M(x)$. Analogously we can prove the dual inequalities $M(x) \le \UU_M(x) \le \UUG_M(x)$.
\end{proof} 

In the next result, we show how the extension of a mean affects its envelopes.  

\begin{thm}
Let $M \colon I^2 \to I$ be a symmetric, continuous, monotone, and strict mean. Then 
\begin{enumerate}[(a)]
    \item $\sG^-(M^e)=\sF^-(M)$ and $\sG^+(M^e)=\sF^+(M)$;
    \item $\LLG_{M^e}=\sup\{\QA{f}\colon f \in \sF^-(M)\}$ and $\UUG_{M^e}=\inf\{\QA{f}\colon f \in \sF^+(M)\}$;
    \item $\LLG_{M^e}=\LL_{M}$ and $\UUG_{M^e}=\UU_{M}$ on $I^2$;
    \item $\LLG_{M^e}\le (\LL_M)^e \le M^e \le (U_M)^e \le \UUG_{M^e}$.
\end{enumerate}

\end{thm}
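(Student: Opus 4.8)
The plan is to treat part~(a) as the engine of the whole statement: it converts a \emph{global} condition on $M^e$ (one holding on every $I^p$) into a purely \emph{bivariate} condition on $M$, after which (b)--(d) are essentially bookkeeping. The two tools that make this reduction work are the preceding lemma, which gives $\big(\QA{\varphi}\restr_2\big)^e=\QA{\varphi}$ for $\varphi\in\CM(I)$, and the order-equivalence $M\le N\iff M^e\le N^e$ furnished by Proposition~\ref{prop:e-pres-ineq}.

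For part~(a) I would establish both inclusions directly. If $f\in\sG^-(M^e)$ then $\QA{f}\le M^e$ on every $I^p$; restricting to $I^2$ and using $M^e\restr_2=M$ gives $\QA{f}\restr_2\le M$, i.e. $f\in\sF^-(M)$. Conversely, if $f\in\sF^-(M)$ then $\QA{f}\restr_2\le M$, and since both are symmetric, continuous, monotone and strict bivariate means, Proposition~\ref{prop:e-pres-ineq} yields $\big(\QA{f}\restr_2\big)^e\le M^e$; by the preceding lemma the left-hand side equals $\QA{f}$, whence $\QA{f}\le M^e$ on all of $\bigcup_{p}I^p$, that is $f\in\sG^-(M^e)$. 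The statement for $\sF^+$ and $\sG^+$ is the exact mirror image.

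Parts~(b) and~(c) then follow with no extra work. By definition $\LLG_{M^e}=\sup\{\QA{f}:f\in\sG^-(M^e)\}$ whenever this family is nonempty, and substituting $\sG^-(M^e)=\sF^-(M)$ from~(a) gives~(b) (the degenerate case $\sF^-(M)=\emptyset$ falls under the convention $\LLG_{M^e}=\min$). Restricting the function produced in~(b) to $I^2$ reproduces verbatim the defining supremum of $\LL_M$, which is~(c); the upper envelopes $\UUG_{M^e}$ and $\UU_M$ are treated identically.

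For part~(d) the two inner inequalities $(\LL_M)^e\le M^e\le(\UU_M)^e$ follow from $\LL_M\le M\le\UU_M$ on $I^2$ (Proposition~\ref{prop:PLU}\eqref{PLU.1}) through Proposition~\ref{prop:e-pres-ineq}. For the outer ones I would use~(b): for each $f\in\sF^-(M)$ one has $\QA{f}\restr_2\le\LL_M$ by the very definition of the envelope as a supremum, hence $\QA{f}=\big(\QA{f}\restr_2\big)^e\le(\LL_M)^e$, and taking the pointwise supremum over $f$ gives $\LLG_{M^e}\le(\LL_M)^e$; the dual argument with infima over $\sF^+(M)$ gives $(\UU_M)^e\le\UUG_{M^e}$. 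The one genuinely nontrivial ingredient, and the step I expect to be the main obstacle, is that $(\LL_M)^e$ and $(\UU_M)^e$ must be defined at all, and that Proposition~\ref{prop:e-pres-ineq} must be applicable to $\LL_M$ and $\UU_M$ — in other words these envelopes must themselves be symmetric, continuous, monotone and strict bivariate means. Symmetry and monotonicity are inherited immediately, as a supremum (resp.\ infimum) of symmetric, monotone quasiarithmetic means, and strictness from the sandwich $\min<\QA{f}\le\LL_M\le M<\max$ for non-constant arguments (and dually for $\UU_M$); \emph{continuity} is the delicate point, since a supremum of continuous functions is only lower semicontinuous a priori. I would secure it by invoking the envelope theory of \cite{Pas20a}, or, failing a ready citation, by isolating it as a separate lemma asserting that the quasiarithmetic envelope of a symmetric, continuous, monotone, strict mean is again continuous.
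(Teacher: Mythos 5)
Your proposal is correct and follows essentially the same route as the paper's own proof: part (a) via the lemma $\big(\QA{\varphi}\restr_2\big)^e=\QA{\varphi}$ combined with Proposition~\ref{prop:e-pres-ineq}, parts (b)--(c) by direct substitution, and part (d) by bounding $\LLG_{M^e}=\sup\{(\QA{f})^e\colon f\in\sF^-(M)\}\le(\LL_M)^e\le M^e$ exactly as you do. The ``delicate point'' you flag --- that $\LL_M$ and $\UU_M$ must themselves be symmetric, continuous, monotone and strict for $(\LL_M)^e$, $(\UU_M)^e$ to be defined and for Proposition~\ref{prop:e-pres-ineq} to be applicable to them --- is not addressed in the paper either: its proof of (d) applies both tools to the envelopes without comment, so your caution identifies a genuine gap that the published argument shares (and would need your proposed separate lemma or a citation to the envelope theory of \cite{Pas20a} to close), rather than a defect of your own approach.
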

\begin{proof}
First, for all $f\in \CM(I)$, we have
\Eq{*}{
f \in \sG^-(M^e) &\iff \QA{f}(x)\le M^e(x)\text{ for all }x \in \bigcup_{p=1}^\infty I^p  \\
&\iff (\QA{f})^e(x)\le M^e(x)\text{ for all }x \in \bigcup_{p=1}^\infty I^p  \\
&\iff \QA{f}(x)\le M(x)\text{ for all }x \in I^2 \\
&\iff f \in \sF^-(M).
}
Thus $\sG^-(M^e)=\sF^-(M)$. The second equality is dual, thus we have proved (a). In view of this, we trivially obtain (b).

To show (c) note that, in the case $\sF^-(M) \ne \emptyset$, we have
\Eq{*}{
\LLG_{M^e}(x)=\sup\{\QA{f}(x)\colon f \in \sF^-(M)\}=\LL_{M}(x) \text{ for all }x \in I^2.
}
For $\sF^-(M) = \emptyset$ we obviously have  $\LLG_{M^e}=\min=\LL_M$ on $I^2$. Similarly $\UUG_{M^e}=\UU_M$ on $I^2$.

To prove (d), we focus on inequalities 
$\LLG_{M^e}\le (\LL_M)^e \le M^e$,
since the remaining ones are dual. Additionally one can assume that $\sF^-(M) \ne \emptyset$, since the second case is trivial.

First, note that 
\Eq{*}{
\LLG_{M^e}=\sup\{\QA{f}\colon f \in \sF^-(M)\}=\sup\{(\QA{f})^e\colon f \in \sF^-(M)\}.
}
However, for all $f \in \sF^-(M)$ we have
\Eq{*}{
(\QA{f})^e \le \big(\sup\{\QA{g}\colon g \in \sF^-(M)\}\big)^e= (L_M)^e,
}
thus $\LLG_{M^e}\le (L_M)^e$. Finally, Proposition~\ref{prop:e-pres-ineq} implies $(\LL_M)^e \le M^e$.
\end{proof}

\subsection{\label{sec:Gini} Gini means} 
Let us recall two results characterizing the comparison in the family of Gini means.

\begin{prop}[\!\!\cite{DarLos70}]
Let $p, q, r, s \in \R$. Then the following conditions are equivalent:
\begin{itemize}
\item $\G_{p,q}(x)\le \G_{r,s}(x)$ for all $n \in \N$ and $x \in \R^n$;
\item $\min(p,q) \le \min(r,s)$, and $\max(p, q) \le  \max(r , s)$;
\item $(p,q,r,s)\in \Delta_\infty$, where
\Eq{*}{
\Delta_\infty:=\{(p,q,r,s)\in \R^4\colon &\min(p,q) \le \min(r,s)\text{ and }\\
&\max(p, q) \le  \max(r , s)\}.
}
\end{itemize}
\end{prop}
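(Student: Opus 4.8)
The equivalence of the second and third conditions is immediate, since the set $\Delta_\infty$ is defined precisely by the two inequalities in the second bullet; hence it suffices to prove that the comparison inequality is equivalent to the pair $\min(p,q)\le\min(r,s)$ and $\max(p,q)\le\max(r,s)$. The device I would use throughout is the observation that, for a fixed $x=(x_1,\dots,x_n)$, the function $\psi_x(t):=\ln\sum_{i=1}^n x_i^t$ is convex (it is the logarithm of a sum of exponentials of $t$), and that $\ln\G_{r,s}(x)$ equals the divided difference $\frac{\psi_x(r)-\psi_x(s)}{r-s}$ when $r\ne s$, and $\psi_x'(r)$ when $r=s$. In this way the whole statement is reduced to a monotonicity property of divided differences of convex functions.

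For the sufficiency (the parameter conditions imply the inequality) I would first use the symmetry $\G_{r,s}=\G_{s,r}$ to reduce to the case $p\le q$ and $r\le s$, so that the conditions read $p\le r$ and $q\le s$. Then I would invoke the elementary fact that the divided difference $(u,v)\mapsto\frac{\psi(u)-\psi(v)}{u-v}$ of a convex function $\psi$ is symmetric and nondecreasing in each argument. Applied to $\psi=\psi_x$, this yields $\ln\G_{p,q}(x)\le\ln\G_{r,q}(x)\le\ln\G_{r,s}(x)$ for every $n$ and every $x$, which is the desired inequality; the diagonal cases $r=s$ or $p=q$ are covered directly by $\psi_x'$ or by continuity of $\G_{r,s}$ in $(r,s)$.

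For the necessity I would argue by contraposition, and by the duality $\G_{p,q}(x)\le\G_{r,s}(x)\iff\G_{-r,-s}(x)\le\G_{-p,-q}(x)$ (obtained by replacing each $x_i$ with $1/x_i$, which sends $t\mapsto -t$ and exchanges the roles of $\min$ and $\max$) it suffices to treat the failure of the maximum condition. So assume $p\le q$, $r\le s$ and $q>s$; then $q=\max\{p,q,r,s\}$ with $q>s\ge r$. The plan is to exhibit a single vector $x$, necessarily with $n\ge 3$, on which $\G_{p,q}(x)>\G_{r,s}(x)$. I would take $x$ to consist of the value $1$ with a large multiplicity $k$ and the value $b>1$ with multiplicity $l$, so that $\psi_x(t)=\ln(k+lb^t)$ has slope close to $0$ for $t$ below the transition point $t^*=\log_b(k/l)$ and close to $\ln b$ above it. Tuning the multiplicities so that $s<t^*<q$ and letting $b\to\infty$, the divided difference over $[r,s]$ tends to $0$, whereas the divided difference over $[p,q]$ grows like $\frac{(q-t^*)\ln b}{q-p}\to\infty$; hence for $b$ large enough $\ln\G_{p,q}(x)>\ln\G_{r,s}(x)$, contradicting the assumed comparison.

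I expect this necessity step to be the main obstacle. The crucial subtlety is that the global comparison is strictly stronger than the two-variable one: on two arguments $\G_{r,-r}$ is always the geometric mean, so no two-point configuration can separate these parameters, and one cannot reduce to $n=2$. The construction must therefore be genuinely multivariate and must exploit the asymptotic (boundary) behaviour of $\psi_x$ to isolate the largest parameter. Forcing the two regimes of $\psi_x$ to be sharply separated (by sending $b\to\infty$ with $t^*$ tuned into the gap $(s,q)$) is what drives the two divided differences apart; checking that the intermediate cases, such as $p=q$ or $p$ lying above $t^*$, also produce the strict inequality is routine once the asymptotics of $\psi_x$ are established.
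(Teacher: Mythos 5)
The paper offers no proof to compare against here: this proposition is quoted as a known comparison theorem of Dar\'oczy and Losonczi \cite{DarLos70}, with the citation standing in for the argument. Assessed on its own merits, your proof is correct, and it is essentially the classical route to this result. The reduction to divided differences of the convex function $\psi_x(t)=\ln\sum_{i=1}^n x_i^t$, with $\ln\G_{r,s}(x)$ equal to the chord slope of $\psi_x$ over $[r,s]$ (and to $\psi_x'(r)$ on the diagonal), makes the sufficiency a one-line consequence of the monotonicity of chord slopes of convex functions, after the harmless normalization $p\le q$, $r\le s$. The necessity, which is the substantive half, is also handled soundly: the substitution $x_i\mapsto 1/x_i$ gives $\G_{r,s}(1/x)=1/\G_{-r,-s}(x)$ and legitimately converts a failure of the minimum condition into a failure of the maximum condition; and for $q>s$ your two-block vector (roughly $b^{t^*}$ copies of $1$ and one copy of $b$, with $s<t^*<q$) does force $\ln\G_{r,s}(x)\to 0$ while $\ln\G_{p,q}(x)\to\infty$ as $b\to\infty$, since both $r$ and $s$ lie below the transition point $t^*$ while $q$ lies above it; this breaks the comparison for $b$ large. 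Your closing observation is also the right one: since $\G_{r,-r}$ restricted to two variables is the geometric mean, no bivariate data can separate these parameters, so the counterexample must be genuinely multivariate --- this is exactly the discrepancy between $\Delta_2$ and $\Delta_\infty$ that motivates the paper. Two small points for a written version: the statement should read $x\in\R_+^n$ rather than $x\in\R^n$ (Gini means are defined for positive entries, as your construction implicitly assumes), and the asymptotics in the cases $p\ge t^*$ and $r=s$, which you correctly flag as routine, should be spelled out.
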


\begin{prop}[\!\!\cite{Pal88c}, Theorem 3]
Let $p, q, r, s \in \R$. Then the following conditions are equivalent:
\begin{itemize}
\item For all $x, y > 0$, $\G_{p,q}(x,y)\le \G_{r,s}(x,y)$;
\item $p + q \le r + s$, $m(p, q) \le m(r , s)$, and $\mu(p, q) \le  \mu(r , s)$, where
\Eq{*}{
m(p,q):=
\begin{cases}
\min(p,q) & \text{ if }p,q\ge 0,\\
0& \text{ if } pq<0,\\
\max(p,q) & \text{ if }p,q\le 0;
\end{cases}
\quad 
\mu(p,q):=
\begin{cases}
\frac{\abs{p}-\abs{q}}{p-q} & \text{ if }p\ne q,\\
\sign(p) & \text{ if }p=q;
\end{cases}
}
\item $(p,q,r,s)\in \Delta_2$, where
\Eq{*}{
\Delta_2:=\{(p,q,r,s)\in \R^4&\colon p + q \le r + s,\ m(p, q) \le m(r , s),\\
&\qquad \mu(p, q) \le  \mu(r , s)\}.
}
\end{itemize}
\end{prop}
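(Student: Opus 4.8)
The equivalence of the second and third conditions is purely definitional, since $\Delta_2$ is exactly the set cut out by the three scalar inequalities $p+q\le r+s$, $m(p,q)\le m(r,s)$, and $\mu(p,q)\le\mu(r,s)$. So the whole content lies in the equivalence of the first condition with membership in $\Delta_2$. The plan is first to collapse the two-variable inequality to a one-variable one by exploiting positive homogeneity. Writing $x=e^{a}$, $y=e^{b}$ and factoring $e^{(a+b)r/2}$ out of $x^{r}+y^{r}$ (and $e^{(a+b)s/2}$ out of $x^{s}+y^{s}$), one obtains, for $r\ne s$,
\[
\log\G_{r,s}(x,y)=\frac{a+b}{2}+\Phi_{r,s}(a-b),\qquad \Phi_{r,s}(t):=\frac{1}{r-s}\log\frac{\cosh(rt/2)}{\cosh(st/2)},
\]
with $\Phi_{r,s}$ even and $\Phi_{r,s}(0)=0$; the diagonal case $r=s$ is recovered as the (continuous) limit. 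Since the additive term $\tfrac{a+b}{2}$ is common to both means, the comparison $\G_{p,q}\le\G_{r,s}$ on $\R_+^2$ is equivalent to the scalar inequality $\Phi_{p,q}(t)\le\Phi_{r,s}(t)$ for all $t\ge 0$.

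Next I would read off the conditions from the two natural regimes of $\Phi_{r,s}$. The Taylor expansion at the origin gives $\Phi_{r,s}(t)=\tfrac{r+s}{8}t^{2}+O(t^{4})$, so comparing leading terms for small $t$ forces $p+q\le r+s$. As $t\to+\infty$ one has $\log\cosh(rt/2)=\tfrac{|r|t}{2}-\log 2+o(1)$, whence $\Phi_{r,s}(t)=\tfrac{\mu(r,s)}{2}\,t+o(t)$; comparing the asymptotic slopes yields $\mu(p,q)\le\mu(r,s)$. The third inequality $m(p,q)\le m(r,s)$ is the subtle one: the two asymptotic regimes leave it undetermined, and it is extracted only from a finer, intermediate analysis of the profile of $\Phi$ (equivalently, by exhibiting a point $t_{0}$ at which the comparison would fail if $m(p,q)>m(r,s)$), and this invariably involves a case distinction on the signs of the four parameters. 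Carrying out this bookkeeping establishes necessity.

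For sufficiency I would assume $(p,q,r,s)\in\Delta_2$ and prove $\Phi_{p,q}\le\Phi_{r,s}$ globally. The cleanest route is a monotonicity-along-paths argument: connect $(p,q)$ to $(r,s)$ by a path inside $\Delta_2$ and reduce the claim to a one-parameter monotonicity statement, namely that for each fixed $t$ the value $\Phi_{r,s}(t)$ does not decrease as the triple $(r+s,\,m(r,s),\,\mu(r,s))$ moves up coordinatewise. This rests on a sign analysis of $\partial_{r}\Phi_{r,s}(t)$ and $\partial_{s}\Phi_{r,s}(t)$, together with the fact that the elementary building blocks $t\mapsto\log\cosh(rt/2)$ form a Chebyshev-like system, so that the difference $\Phi_{r,s}-\Phi_{p,q}$ has only a controlled (bounded) number of sign changes on $(0,\infty)$; once the sign changes are bounded, the behavior at $0$ and at $\infty$ together with the intermediate $m$-condition pin down the sign everywhere.

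The main obstacle is precisely this global step. The two asymptotic regimes at $0$ and $\infty$ only constrain the endpoints of $\Phi$, and upgrading them to an inequality valid for every $t$ requires ruling out an interior crossing of $\Phi_{p,q}$ and $\Phi_{r,s}$. This is where $m$ becomes indispensable and where the sign-dependent piecewise definitions of $m$ and $\mu$ force an unavoidable case split (both parameters positive, both negative, mixed signs, and the boundary cases $p=q$ and $r=s$). Organising that case analysis, rather than any single computation, is the real labour of the proof.
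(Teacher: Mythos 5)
First, a point of reference: the paper contains no proof of this proposition at all. It is quoted verbatim from P\'ales \cite{Pal88c} (Theorem 3) as a known comparison theorem for two-variable Gini means, so there is no internal argument to measure you against; your attempt has to stand on its own. The parts you actually execute are correct. The equivalence of the second and third bullets is indeed purely definitional. Your reduction is the standard and right one: with $x=e^a$, $y=e^b$ one gets
\[
\log\G_{r,s}(x,y)=\tfrac{a+b}{2}+\Phi_{r,s}(a-b),\qquad
\Phi_{r,s}(t)=\tfrac{1}{r-s}\log\tfrac{\cosh(rt/2)}{\cosh(st/2)},
\]
with the case $r=s$ as the continuous limit $\frac{t}{2}\tanh(rt/2)$, and your two limit computations correctly force $p+q\le r+s$ (from $t\to0$, since $\Phi_{r,s}(t)=\frac{r+s}{8}t^2+O(t^4)$) and $\mu(p,q)\le\mu(r,s)$ (from the slope $\mu(r,s)/2$ as $t\to\infty$).

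However, the two genuinely hard parts are left as descriptions of intentions rather than arguments, and this is a real gap, not a presentational one. (i) Necessity of $m(p,q)\le m(r,s)$: you say it comes from ``a finer, intermediate analysis'' or from ``exhibiting a point $t_0$'', but you never produce either. Concretely, this condition does not live at intermediate $t$; it lives in the next order of the $t\to\infty$ expansion, and where it lives depends on signs. For $p>q>0$ one has $\Phi_{p,q}(t)=\frac{t}{2}+\frac{1}{p-q}\log\frac{1+e^{-pt}}{1+e^{-qt}}=\frac{t}{2}-\frac{e^{-qt}}{p-q}\,(1+o(1))$, so if $m(p,q)>m(r,s)>0$ then $\Phi_{p,q}(t)-\Phi_{r,s}(t)\sim\frac{e^{-m(r,s)t}}{r-s}>0$ for large $t$, which kills the comparison; if instead one parameter vanishes (say $s=0$), the obstruction sits in the constant term $-\frac{\log 2}{r}$ of $\Phi_{r,0}$; the all-negative and mixed-sign cases are different again. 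None of this bookkeeping, which is the actual content of the $m$-condition, appears in your text. (ii) Sufficiency is entirely unproved: the claim that $\Phi_{r,s}(t)$ is, for each fixed $t$, nondecreasing as the triple $(r+s,\,m(r,s),\,\mu(r,s))$ increases coordinatewise is just a restatement of the theorem along infinitesimal paths, and the assertion that the functions $t\mapsto\log\cosh(rt/2)$ form a ``Chebyshev-like system'' bounding the sign changes of $\Phi_{r,s}-\Phi_{p,q}$ is a nontrivial lemma you neither state precisely nor prove. As written, your proposal is a correct framing plus two correct limit computations; the necessity of the $m$-condition and the entire sufficiency direction remain open, so it does not yet constitute a proof.
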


By \cite{Los71a,Los71b} we know that $\G_{p,q}$ is monotone if and only if 
\Eq{*}{
(p,q) \in \mathrm{Mon}_G:=\{(p,q)\in \R^2 \colon pq\le 0\}=m^{-1}(0).
}

As a straightforward application of Proposition~\ref{prop:e-pres-ineq} we get
\begin{cor}
Let $p, q, r, s \in \R$ with $(p,q), (r,s)\in \mathrm{Mon}_G$. 

Then $\G_{p,q}^e(x)\le \G_{r,s}^e(x)$ for all $n \in \N$ and $x \in \R_+^n$ if and only if $(p,q,r,s)\in \Delta_2$.
\end{cor}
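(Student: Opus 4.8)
The plan is to reduce the multivariable comparison of the extensions to a purely bivariate comparison via Proposition~\ref{prop:e-pres-ineq}, and then to invoke the known bivariate comparison theorem for Gini means. Recall that $\G_{p,q}^e$ and $\G_{r,s}^e$ denote, by definition, the iterative $\beta$-invariant extensions of the \emph{bivariate} restrictions $\G_{p,q}\restr_2$ and $\G_{r,s}\restr_2$; so the whole question lives in the hypotheses of Proposition~\ref{prop:e-pres-ineq}.

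First I would verify that both bivariate means $\G_{p,q}\restr_2$ and $\G_{r,s}\restr_2$ fulfil the hypotheses of Proposition~\ref{prop:e-pres-ineq}, namely that they are symmetric, continuous, monotone, and strict. Symmetry and continuity are immediate from the defining formula of the Gini mean, and strictness is a standard property of $\G_{r,s}$ on non-constant vectors. The only property that genuinely uses the assumption of the corollary is monotonicity: by the Losonczi characterisation \cite{Los71a,Los71b}, $\G_{p,q}$ is monotone precisely when $(p,q)\in\mathrm{Mon}_G$, and likewise for $(r,s)$; in particular the bivariate restrictions are monotone under the stated hypotheses.

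Next, since the hypotheses of Proposition~\ref{prop:e-pres-ineq} now hold for $M=\G_{p,q}\restr_2$ and $N=\G_{r,s}\restr_2$, that proposition yields the equivalence
\Eq{*}{
\G_{p,q}^e\le \G_{r,s}^e \iff \G_{p,q}\restr_2\le \G_{r,s}\restr_2,
}
so that the comparison of the extensions over all arities $n\in\N$ and all $x\in\R_+^n$ is equivalent to the single bivariate inequality $\G_{p,q}(x,y)\le \G_{r,s}(x,y)$ for all $x,y>0$. Finally, the comparison theorem of P\'ales \cite{Pal88c} recalled above identifies this bivariate inequality with the membership $(p,q,r,s)\in\Delta_2$. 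Chaining the two equivalences gives the assertion.

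I do not expect a serious obstacle here, as the statement is essentially the composition of two already-available equivalences; this is why it is presented as a straightforward application. The only point requiring a moment of care is checking that the bivariate restrictions inherit all four regularity properties demanded by Proposition~\ref{prop:e-pres-ineq}, and in particular confirming that monotonicity of the bivariate restriction is guaranteed by $(p,q),(r,s)\in\mathrm{Mon}_G$ --- this is exactly the place where the hypothesis on the parameters is used.
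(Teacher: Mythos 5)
Your proposal is correct and takes essentially the same route the paper intends: the corollary is stated there as a direct consequence of Proposition~\ref{prop:e-pres-ineq}, i.e., one chains that equivalence ($M\le N \iff M^e\le N^e$ for symmetric, continuous, monotone, strict bivariate means) with P\'ales' bivariate comparison criterion ($\G_{p,q}\le\G_{r,s}$ on $\R_+^2$ iff $(p,q,r,s)\in\Delta_2$), exactly as you do. Your explicit check that the hypothesis $(p,q),(r,s)\in\mathrm{Mon}_G$ supplies, via the Losonczi characterisation, the monotonicity required by Proposition~\ref{prop:e-pres-ineq} is precisely the point the paper leaves implicit.
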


This corollary shows that the equality $\G_{p,q}=\G_{p,q}^e$ fails to be valid for a large subclass of parameters $(p,q)$.


\end{document}